\makeatletter \@namedef{subjclassname@2010}{  \textup{2010}
Mathematics Subject Classification}
\theoremstyle{plain}
\newtheorem{theorem}{Theorem}[section]
\newtheorem{corollary}[theorem]{Corollary}
\newtheorem{proposition}[theorem]{Proposition}
\newtheorem{lemma}[theorem]{Lemma}
\theoremstyle{remark}
\newtheorem{remark}[theorem]{Remark}
\theoremstyle{definition}
\theoremstyle{example}
\newtheorem{example}[theorem]{Example}
\numberwithin{equation}{section}
\begin{document}
\title[OPERATORS WITH\ A SINGLETON\ LOCAL SPECTRUM]{LOCAL\ SPECTRUM, LOCAL\
SPECTRAL RADIUS, AND GROWTH CONDITIONS }
\author{HEYBETKULU\ MUSTAFAYEV}
\address{Van Yuzuncu Yil University, Faculty of Science, Department of
Mathematics, VAN-TURKEY}
\email{hsmustafayev@yahoo.com}
\subjclass[2010]{ 47A10, 47A11, 30D20.}
\keywords{Operator, (local) spectrum, (local) spectral radius, growth
condition, Beurling algebra.}

\begin{abstract}
Let $X$ be a complex Banach space and $x\in X.$ Assume that a bounded linear
operator $T$ on $X$ satisfies the condition 
\begin{equation*}
\left\Vert e^{tT}x\right\Vert \leq C_{x}\left( 1+\left\vert t\right\vert
\right) ^{\alpha }\text{\ }\left( \alpha \geq 0\right) ,
\end{equation*}%
for all $t\in 
\mathbb{R}
$ and for some constant $C_{x}>0.$ For the function $f$ from the Beurling
algebra $L_{\omega }^{1}\left( 
\mathbb{R}
\right) $ with the weight $\omega \left( t\right) =\left( 1+\left\vert
t\right\vert \right) ^{\alpha },$ we can define an element in $X$, denoted
by $x_{f}$, which integrates $e^{tT}x$ with respect to $f.$ We present
complete description of the elements $x_{f}$ in the case when the local
spectrum of $T$ at $x$ consists of one-point. In the case $0\leq \alpha <1,$
some estimates for the norm of $Tx$ via local spectral radius of $T$ at $x$
are obtained. Some applications of these results are also given.
\end{abstract}

\maketitle

\section{Introduction}

Throughout the paper, $X$ will denote a complex Banach space and $B\left(
X\right) ,$ the algebra of all bounded linear operators on $X.$ As usual,
the spectrum and the spectral radius of $T\in B\left( X\right) $ will be
denoted by $\sigma \left( T\right) $ and $r\left( T\right) ,$ respectively.
A classical theorem of Gelfand \cite{7} states that if $\sigma \left(
T\right) =\left\{ 1\right\} $ and $\sup_{n\in 
\mathbb{Z}
}\left\Vert T^{n}\right\Vert <\infty ,$ then $T=I.$ A result more general
than Gelfand's theorem has been proved by Hille \cite{9}: If $\sigma \left(
T\right) =\left\{ 1\right\} $ and $\left\Vert T^{n}\right\Vert =O\left(
\left\vert n\right\vert ^{k}\right) $ $\left( \left\vert n\right\vert
\rightarrow \infty \right) ,$ then $\left( T-I\right) ^{k+1}=0.$ Over the
years, these results led to a number of extensions and variations, as shown
in the survey article by Zemanek \cite{21}.

Continuous version of the Gelfand theorem states that if a quasinilpotent
operator $T\in B\left( X\right) $ satisfies the condition $\sup_{t\in 
\mathbb{R}
}\left\Vert e^{tT}\right\Vert <\infty $, then $T=0.$ We give a sketch of the
proof:

For an arbitrary $f\in L^{1}\left( 
\mathbb{R}
\right) ,$ we can define a bounded linear operator $T_{f}$ on $X$ by 
\begin{equation*}
T_{f}x=\int_{%
\mathbb{R}
}f\left( t\right) e^{tT}xdt\text{, \ }x\in X.
\end{equation*}%
Then the map $f\mapsto T_{f}$ is a continuous homomorphism. It is easy to
check that $I:=\left\{ f\in L^{1}\left( 
\mathbb{R}
\right) :T_{f}=0\right\} $ is a closed ideal of $L^{1}\left( 
\mathbb{R}
\right) $ whose hull is $\left\{ 0\right\} .$ Since $\left\{ 0\right\} $ is
a set of synthesis for $L^{1}\left( 
\mathbb{R}
\right) ,$ we have $I=\left\{ f\in L^{1}\left( 
\mathbb{R}
\right) :\widehat{f}\left( 0\right) =0\right\} ,$ where $\widehat{f}$ is the
Fourier transform of $f.$ It follows that for a fixed $x\in X,$ the
functions $t\rightarrow \varphi \left( e^{tT}x\right) $ are constants for
every $\varphi \in X^{\ast }.$ Consequently, $\varphi \left( e^{tT}x\right)
=\varphi \left( x\right) $ for every $\varphi \in X^{\ast }$ and for all $%
t\in 
\mathbb{R}
.$ So we have $e^{tT}x=x$ for all $x\in X$ and $t\in 
\mathbb{R}
.$ This clearly implies that $T=0.$

In this paper, we develop this idea further and prove some results for the
local spectrum and local spectral radius for a wider class of operators.

Let $L_{\omega }^{1}\left( 
\mathbb{R}
\right) $ be the Beurling algebra with the weight $\omega \left( t\right)
=\left( 1+\left\vert t\right\vert \right) ^{\alpha }$ $\left( \alpha \geq
0\right) $ and let $x\in X.$ Assume that $T\in B\left( X\right) $ satisfies
the condition 
\begin{equation}
\left\Vert e^{tT}x\right\Vert \leq C_{x}\left( 1+\left\vert t\right\vert
\right) ^{\alpha }\text{, }  \label{1.1}
\end{equation}%
for all $t\in 
\mathbb{R}
$ and for some constant $C_{x}>0.$ For an arbitrary $f\in L_{\omega
}^{1}\left( 
\mathbb{R}
\right) ,$ we can define an element $x_{f}$ in $X$ by 
\begin{equation}
x_{f}=\int_{%
\mathbb{R}
}f\left( t\right) e^{tT}xdt.  \label{1.2}
\end{equation}%
\ 

We present complete description of the elements $x_{f}$ in the case when the
local spectrum of $T$ at $x$ consists of one-point. In the case $0\leq
\alpha <1,$ some estimates for the norm of $Tx$ via local spectral radius of 
$T$ at $x$ are given. Some applications of these results are also given.

\section{The results}

Let us first introduce some basic definitions and results.

For an arbitrary $T\in B\left( X\right) $ and $x\in X$, we define $\rho
_{T}\left( x\right) $ to be the set of all $\lambda \in 
\mathbb{C}
$ for which there exists a neighborhood $U_{\lambda }$ of $\lambda $ with $%
u\left( z\right) $ analytic on $U_{\lambda }$ having values in $X$ such that 
$\left( zI-T\right) u\left( z\right) =x$ for all $z\in U_{\lambda }$. This
set is open and contains the resolvent set of $T$. By definition, the 
\textit{local spectrum} of $T$ at $x\in X$, denoted by $\sigma _{T}\left(
x\right) $ is the complement of $\rho _{T}\left( x\right) $, so it is a
compact subset of $\sigma \left( T\right) $. This object is most tractable
if the operator $T$ has the \textit{single-valued extension property }%
(SVEP), i.e. for every open set $U$ in $%
\mathbb{C}
,$ the only analytic function $u:U\rightarrow X$ for which the equation $%
\left( zI-T\right) u\left( z\right) =0$ holds is the constant function $%
u\equiv 0$. If $T$ has SVEP, then $\sigma _{T}\left( x\right) \neq \emptyset
,$ whenever $x\in X\diagdown \left\{ 0\right\} $ \cite[Proposition 1.2.16]%
{12}. It is easy to see that an operator $T\in B\left( X\right) $ having
spectrum without interior points has the SVEP.

The \textit{local spectral radius }of $T\in B\left( X\right) $ at $x\in X$
is defined by 
\begin{equation*}
r_{T}\left( x\right) =\sup \left\{ \left\vert \lambda \right\vert :\lambda
\in \sigma _{T}\left( x\right) \right\} .
\end{equation*}%
If $T$ has SVEP then%
\begin{equation*}
r_{T}\left( x\right) =\underset{n\rightarrow \infty }{\overline{\lim }}%
\left\Vert T^{n}x\right\Vert ^{\frac{1}{n}}
\end{equation*}%
\cite[Proposition 3.3.13]{12}.

Recall that a \textit{weight} \textit{function} (shortly a \textit{weight}) $%
\omega $ is a continuous function on $%
\mathbb{R}
$ such that 
\begin{equation*}
\omega \left( t\right) \geq 1\text{ \ and \ }\left( t+s\right) \leq \omega
\left( t\right) \omega \left( s\right) \text{, \ }\forall t,s\in 
\mathbb{R}
.
\end{equation*}%
For a weight function $\omega $, by $L_{\omega }^{1}\left( 
\mathbb{R}
\right) $ we will denote the Banach space of the functions $f\in L^{1}\left( 
\mathbb{R}
\right) $ with the norm%
\begin{equation*}
\left\Vert f\right\Vert _{1,\omega }=\int_{%
\mathbb{R}
}\left\vert f\left( t\right) \right\vert \omega \left( t\right) dt<\infty .
\end{equation*}%
The space $L_{\omega }^{1}\left( 
\mathbb{R}
\right) $ with convolution product and the norm $\left\Vert \cdot
\right\Vert _{1,\omega }$ is a commutative Banach algebra and is called 
\textit{Beurling algebra}. The dual space of $L_{\omega }^{1}\left( 
\mathbb{R}
\right) ,$ denoted by $L_{\omega }^{\infty }\left( 
\mathbb{R}
\right) $, is the space of all measurable functions $g$ on $%
\mathbb{R}
$ such that 
\begin{equation*}
\left\Vert g\right\Vert _{\omega ,\infty }:=\text{ess}\sup_{t\in 
\mathbb{R}
}\frac{\left\vert g\left( t\right) \right\vert }{\omega \left( t\right) }%
<\infty .
\end{equation*}%
The duality being implemented by the formula%
\begin{equation*}
\langle g,f\rangle =\int_{%
\mathbb{R}
}g\left( -t\right) f\left( t\right) dt\text{, \ }\forall f\in L_{\omega
}^{1}\left( 
\mathbb{R}
\right) ,\text{ }\forall g\in L_{\omega }^{\infty }\left( 
\mathbb{R}
\right) .
\end{equation*}%
Throughout, $\left[ \alpha \right] $ will denote the integer part of $\alpha
\in 
\mathbb{R}
.$

Let $x\in X$ and assume that $T\in B\left( X\right) $ satisfies the
condition (1.1). For an arbitrary $f\in L_{\omega }^{1}\left( 
\mathbb{R}
\right) ,$ we can define an element $x_{f}$ of $X$ by (1.2).

\begin{theorem}
Let $\omega \left( t\right) =\left( 1+\left\vert t\right\vert \right)
^{\alpha }$ $\left( \alpha \geq 0\right) .$ Assume that $T\in B\left(
X\right) $ has SVEP\ and $x\in X$ satisfies the condition $\left\Vert
e^{tT}x\right\Vert \leq C_{x}\omega \left( t\right) $ for all $t\in 
\mathbb{R}
$ and for some constant $C_{x}>0.$ Then the following assertions hold:

$\left( a\right) $ $\sigma _{T}\left( x\right) \subset i%
\mathbb{R}
.$

$\left( b\right) $ If $\sigma _{T}\left( x\right) =\left\{ i\lambda \right\} 
$ $\left( \lambda \in 
\mathbb{R}
\right) ,$ then for an arbitrary $f\in L_{\omega }^{1}\left( 
\mathbb{R}
\right) $, 
\begin{equation*}
x_{f}=\widehat{f}\left( \lambda \right) x+\frac{\widehat{f}^{\prime }\left(
\lambda \right) }{1!}\left( iT+\lambda I\right) x+...+\frac{\widehat{f}%
^{\left( k\right) }\left( \lambda \right) }{k!}\left( iT+\lambda I\right)
^{k}x,
\end{equation*}%
where $k=[\alpha ].$
\end{theorem}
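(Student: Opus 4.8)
The plan is to prove (a) by a two--sided Laplace transform and to reduce (b) to the single assertion that the hypothesis forces $\left( T-i\lambda I\right) ^{k+1}x=0$, after which the displayed identity drops out of a direct integration.

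For (a): for $\operatorname{Re}z>0$ the integral $u_{+}\left( z\right) =\int_{0}^{\infty }e^{-zt}e^{tT}x\,dt$ converges absolutely by the growth hypothesis, depends analytically on $z$, and (integrating by parts, using that the boundary term at $+\infty $ vanishes) satisfies $\left( zI-T\right) u_{+}\left( z\right) =x$; hence the open right half--plane lies in $\rho _{T}\left( x\right) $. The mirror computation with $u_{-}\left( z\right) =-\int_{-\infty }^{0}e^{-zt}e^{tT}x\,dt$ covers the open left half--plane, so $\sigma _{T}\left( x\right) \subset i\mathbb{R}$. Since $T$ has SVEP the local resolvent is unique, so $u_{+}$ and $u_{-}$ are restrictions of one analytic function $u:\mathbb{C}\setminus \sigma _{T}\left( x\right) \rightarrow X$; when $\sigma _{T}\left( x\right) =\left\{ i\lambda \right\} $ this $u$ is analytic on all of $\mathbb{C}\setminus \left\{ i\lambda \right\} $.

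For (b): replace $T$ by $S=T-i\lambda I$, which keeps SVEP and the growth bound and moves the local spectrum to $\sigma _{S}\left( x\right) =\left\{ 0\right\} $, so it suffices to handle $\lambda =0$ and prove $S^{k+1}x=0$. Fix $\varphi \in X^{\ast }$ and put $g\left( t\right) =\varphi \left( e^{tS}x\right) $; then $\left\vert g\left( t\right) \right\vert \leq C\left( 1+\left\vert t\right\vert \right) ^{\alpha }$, so $g$ is a tempered distribution, and for $\operatorname{Re}z\neq 0$ the Carleman transform of $g$ equals $\varphi \left( u\left( z\right) \right) $, which by the preceding paragraph is analytic on the whole of $\mathbb{C}\setminus \left\{ 0\right\} $. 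By the standard fact relating the spectrum of $g$ (the support of $\widehat{g}$) to the singular set of its Carleman transform, $\widehat{g}$ is supported in $\left\{ 0\right\} $, hence is a finite combination of derivatives of the Dirac mass at the origin; the bound on $g$ forces its order to be at most $\left[ \alpha \right] =k$, so $g$ is a polynomial in $t$ of degree $\leq k$. Since this holds for every $\varphi \in X^{\ast }$, a Lagrange interpolation argument gives $e^{tS}x=\sum_{j=0}^{k}t^{j}b_{j}$ with $b_{j}\in X$; comparing with the everywhere convergent expansion $e^{tS}x=\sum_{n\geq 0}\frac{t^{n}}{n!}S^{n}x$ yields $S^{k+1}x=0$ and $b_{j}=\frac{1}{j!}S^{j}x$. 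Returning to $T$ we obtain the finite sum $e^{tT}x=e^{i\lambda t}\sum_{j=0}^{k}\frac{t^{j}}{j!}\left( T-i\lambda I\right) ^{j}x$, whence for $f\in L_{\omega }^{1}\left( \mathbb{R}\right) $
\begin{equation*}
x_{f}=\int_{\mathbb{R}}f\left( t\right) e^{tT}x\,dt=\sum_{j=0}^{k}\frac{\left( T-i\lambda I\right) ^{j}x}{j!}\int_{\mathbb{R}}t^{j}e^{i\lambda t}f\left( t\right) \,dt ;
\end{equation*}
recognizing $\int_{\mathbb{R}}t^{j}e^{i\lambda t}f\left( t\right) \,dt$ as the appropriate scalar multiple of $\widehat{f}^{\left( j\right) }\left( \lambda \right) $ (all these moments being finite since $j\leq \alpha $) and using $\left( iT+\lambda I\right) ^{j}x=i^{j}\left( T-i\lambda I\right) ^{j}x$ produces the stated formula.

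The main obstacle is the passage, inside the third paragraph, from ``the local resolvent $u$ has its only singularity at $i\lambda $'' to ``$e^{tT}x$ is an exponential times a polynomial of degree $\leq \left[ \alpha \right] $'': one must bound the order of an isolated singularity of a vector--valued analytic function that is only polynomially restrained along vertical lines, and it is precisely the Carleman transform / Beurling spectrum machinery — together with the growth bound forcing the distributional order to be finite — that delivers this; the remaining steps are routine once $\left( T-i\lambda I\right) ^{k+1}x=0$ is known. An alternative route to this step stays inside the Beurling algebra: $f\mapsto x_{f}$ is a module map whose kernel is a closed ideal of $L_{\omega }^{1}\left( \mathbb{R}\right) $ with one--point hull, and one invokes the known spectral--synthesis behaviour of such ideals for the weight $\left( 1+\left\vert t\right\vert \right) ^{\alpha }$ to conclude $x_{f}$ depends only on the Taylor data $\widehat{f}\left( \lambda \right) ,\ldots ,\widehat{f}^{\left( \left[ \alpha \right] \right) }\left( \lambda \right) $.
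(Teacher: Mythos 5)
Your proposal is correct, but for part (b) it takes a genuinely different route from the paper. The paper never proves $\left( T-i\lambda I\right) ^{k+1}x=0$ first; instead it works entirely inside the Beurling algebra: from $\mathrm{hull}\left( I_{x}\right) =\left\{ 0\right\} $ (Proposition 2.3) it gets $J_{\omega }\left( \left\{ 0\right\} \right) \subseteq I_{x}$, invokes the cited description of the minimal ideal with one-point hull, $J_{\omega }\left( \left\{ 0\right\} \right) =\{ f:\widehat{f}\left( 0\right) =\cdots =\widehat{f}^{\left( k\right) }\left( 0\right) =0\} $, and then subtracts from $f$ a Taylor correction $\sum_{j}\frac{\widehat{f}^{\left( j\right) }\left( 0\right) }{j!}\left( -i\right) ^{j}\phi ^{\left( j\right) }$ built from a Schwartz function $\phi $ with $\widehat{\phi }=1$ near $0$, using $x_{\phi ^{\left( j\right) }}=\left( -1\right) ^{j}T^{j}x_{\phi }=\left( -1\right) ^{j}T^{j}x$; the nilpotency statement is then deduced as a corollary (Corollary 2.8) by choosing $f=\phi ^{\left( k+1\right) }$. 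You reverse the logical order: you first show $\widehat{\varphi _{x}}$ is a tempered distribution supported at the origin, hence a finite sum of derivatives of $\delta _{0}$, so that $e^{tS}x$ is a vector polynomial of degree at most $k$ and $S^{k+1}x=0$, after which the displayed formula is a direct integration. What each approach buys: the paper's argument leans on the nontrivial spectral-synthesis description of $J_{\omega }\left( \left\{ s\right\} \right) $ for the polynomial weight (Gelfand--Raikov--Shilov, Zarrabi), whereas yours replaces that by the classical structure theorem for distributions with point support, at the cost of having to justify that the Beurling spectrum of $\varphi _{x}$ (singular set of its Carleman transform) coincides with the distributional support of $\widehat{\varphi _{x}}$ --- standard for polynomial weights, but a step you should state explicitly since the paper's cited machinery (Gurarii) is phrased for the Beurling spectrum, not for tempered distributions. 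Your route has the advantage of producing the finite expansion $e^{tT}x=e^{i\lambda t}\sum_{j=0}^{k}\frac{t^{j}}{j!}\left( T-i\lambda I\right) ^{j}x$ and hence Corollary 2.8 for free. One shared blemish: with the paper's convention $\widehat{f}\left( s\right) =\int f\left( t\right) e^{-ist}dt$ your moment integral $\int t^{j}e^{i\lambda t}f\left( t\right) dt$ is a multiple of $\widehat{f}^{\left( j\right) }\left( -\lambda \right) $ rather than $\widehat{f}^{\left( j\right) }\left( \lambda \right) $, but exactly the same sign issue is present in the paper's own reduction to $\lambda =0$, so it does not count against you.
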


For the proof, we need some preliminary results.

Recall that a weight function $\omega $ is is said to be \textit{regular} if 
\begin{equation*}
\int_{%
\mathbb{R}
}\frac{\log \omega \left( t\right) }{1+t^{2}}dt<\infty .
\end{equation*}%
For example, $\omega \left( t\right) =\left( 1+\left\vert t\right\vert
\right) ^{\alpha }$ $\left( \alpha \geq 0\right) $\ is a regular weight and
it is called \textit{polynomial weight}. If $\omega $ is a regular weight
(in this paper, we will consider regular weights only), then 
\begin{equation}
\lim_{t\rightarrow +\infty }\frac{\log \omega \left( t\right) }{t}%
=\lim_{t\rightarrow +\infty }\frac{\log \omega \left( -t\right) }{t}=0.
\label{2.1}
\end{equation}%
Consequently, the maximal ideal space of the algebra $L_{\omega }^{1}\left( 
\mathbb{R}
\right) $ can be identified with $%
\mathbb{R}
$. The Gelfand transform of $f\in L_{\omega }^{1}\left( 
\mathbb{R}
\right) $ is just the Fourier transform of $f.$ It follows that $L_{\omega
}^{1}\left( 
\mathbb{R}
\right) $ is semisimple. Moreover, the algebra $L_{\omega }^{1}\left( 
\mathbb{R}
\right) $ is regular (in the Shilov sense). Recall also that $L_{\omega
}^{1}\left( 
\mathbb{R}
\right) $ is Tauberian, that is, the set $\left\{ f\in L_{\omega }^{1}\left( 
\mathbb{R}
\right) :\text{supp}\widehat{f}\text{ is compact}\right\} $ is dense in $%
L_{\omega }^{1}\left( 
\mathbb{R}
\right) $ (see, \cite[Ch.III]{8}, \cite[Ch.5]{13}, and \cite[Ch.2]{17}).

We will need also the following well-known fact which is true for every
commutative semisimple regular Banach algebra:

If $\left\{ I_{\lambda }\right\} _{\lambda \in \Lambda }$ is a collection of
the closed ideals of $L_{\omega }^{1}\left( 
\mathbb{R}
\right) $, then 
\begin{equation}
\text{\textnormal{hull}}\left( \bigcap_{\lambda \in \Lambda }I_{\lambda
}\right) =\overline{\bigcup_{\lambda \in \Lambda }\text{\textnormal{hull}}%
\left( I_{\lambda }\right) }.  \label{2.2}
\end{equation}

Denote by $M_{\omega }\left( 
\mathbb{R}
\right) $ the Banach algebra (with respect to convolution product) of all
complex regular Borel measures on $%
\mathbb{R}
$ such that 
\begin{equation*}
\left\Vert \mu \right\Vert _{1,\omega }:=\int_{%
\mathbb{R}
}\omega \left( t\right) d\left\vert \mu \right\vert \left( t\right) <\infty .
\end{equation*}%
The algebra $L_{\omega }^{1}\left( 
\mathbb{R}
\right) $ is naturally identifiable with a closed ideal of $M_{\omega
}\left( 
\mathbb{R}
\right) .$ By $\widehat{f}$ and $\widehat{\mu }$, we will denote the Fourier
and the Fourier-Stieltjes transform of $f\in L_{\omega }^{1}\left( 
\mathbb{R}
\right) $ and $\mu \in M_{\omega }\left( 
\mathbb{R}
\right) $, respectively.

To any closed subset $S$ of $%
\mathbb{R}
,$ the following two closed ideals of $L_{\omega }^{1}\left( 
\mathbb{R}
\right) $ associated:%
\begin{equation*}
I_{\omega }\left( S\right) :=\left\{ f\in L_{\omega }^{1}\left( 
\mathbb{R}
\right) :\widehat{f}\left( S\right) =\left\{ 0\right\} \right\}
\end{equation*}%
and 
\begin{equation*}
J_{\omega }\left( S\right) :=\overline{\left\{ f\in L_{\omega }^{1}\left( 
\mathbb{R}
\right) :\text{supp}\widehat{f}\text{ is compact and supp}\widehat{f}\cap
S=\emptyset \right\} }.
\end{equation*}%
The ideals $J_{\omega }\left( S\right) $ and $I_{\omega }\left( S\right) $
are respectively, the smallest and the largest closed ideals in $L_{\omega
}^{1}\left( 
\mathbb{R}
\right) $ with hull $S$. When these two ideals coincide, the set $S$ is said
to be a \textit{set of} \textit{synthesis} for $L_{\omega }^{1}\left( 
\mathbb{R}
\right) $ (for instance, see \cite[Sect. 8.3]{11}). Notice that%
\begin{equation*}
I_{\omega }\left( \left\{ \infty \right\} \right) =L_{\omega }^{1}\left( 
\mathbb{R}
\right) \text{ \ and \ }J_{\omega }\left( \left\{ \infty \right\} \right) =%
\overline{\left\{ f\in L_{\omega }^{1}\left( 
\mathbb{R}
\right) :\text{supp}\widehat{f}\text{ is compact}\right\} }.
\end{equation*}%
Since the algebra $L_{\omega }^{1}\left( 
\mathbb{R}
\right) $ is Tauberian, $I_{\omega }\left( \left\{ \infty \right\} \right)
=J_{\omega }\left( \left\{ \infty \right\} \right) $. Hence, $\left\{ \infty
\right\} $ is a set of synthesis for $L_{\omega }^{1}\left( 
\mathbb{R}
\right) .$

If $\omega \left( t\right) =\left( 1+\left\vert t\right\vert \right)
^{\alpha }$, then the first $k$ $\left( =\left[ \alpha \right] \right) $
derivatives of the Fourier transform of $f\in L_{\omega }^{1}\left( 
\mathbb{R}
\right) $ exist at each point $\lambda \in 
\mathbb{R}
$ and 
\begin{equation*}
J_{\omega }\left( \left\{ \lambda \right\} \right) =\left\{ f\in L_{\omega
}^{1}\left( 
\mathbb{R}
\right) :\widehat{f}\left( \lambda \right) =\widehat{f}^{\prime }\left(
\lambda \right) =...=\widehat{f}^{\left( k\right) }\left( \lambda \right)
=0\right\}
\end{equation*}%
(for instance, see \cite[Ch. IV]{8} and \cite{20}). It follows that if $%
0\leq \alpha <1,$ then each point (consequently, each finite subset) of $%
\mathbb{R}
$ is a set of synthesis for $L_{\omega }^{1}\left( 
\mathbb{R}
\right) $.

Let $B_{g}$ be the weak$^{\ast }$-closed translation invariant subspace of $%
L_{\omega }^{\infty }\left( 
\mathbb{R}
\right) $ generated by $g\in L_{\omega }^{\infty }\left( 
\mathbb{R}
\right) .$ The \textit{Beurling spectrum }sp$_{B}\left( g\right) $ of $g\in
L_{\omega }^{\infty }\left( 
\mathbb{R}
\right) $ is defined by 
\begin{equation*}
\text{sp}_{B}\left( g\right) =\left\{ \lambda \in 
\mathbb{R}
:e^{-i\lambda t}\in B_{g}\right\} .
\end{equation*}%
It is easy to verify that 
\begin{equation*}
\text{sp}_{B}\left( g\right) =\text{hull}\left( \mathcal{I}_{g}\right) \text{%
,}
\end{equation*}%
where%
\begin{equation*}
\mathcal{I}_{g}=\left\{ f\in L_{\omega }^{1}\left( 
\mathbb{R}
\right) :f\ast g=0\right\}
\end{equation*}%
is a closed ideal of $L_{\omega }^{1}\left( 
\mathbb{R}
\right) $.

The \textit{Carleman transform} of $g\in L_{\omega }^{\infty }\left( 
\mathbb{R}
\right) $ is defined as the analytic function $G\left( z\right) $ on $%
\mathbb{C}
\diagdown i%
\mathbb{R}
$ given by 
\begin{equation*}
G\left( z\right) =\left\{ 
\begin{tabular}{l}
$\int\limits_{0}^{\infty }e^{-zt}{g\left( t\right) dt\text{, }\ }$%
\textnormal{Re}${z>0\text{;}}$ \\ 
${-}\int\limits_{-\infty }^{0}e^{-zt}{g\left( t\right) dt\text{, \ Re}z<0%
\text{.}}$%
\end{tabular}%
\right.
\end{equation*}%
We know \cite{10} that $\lambda \in $sp$_{B}\left\{ g\right\} $ if and only
if the Carleman transform $G\left( z\right) $ of $g$ has no analytic
extension to a neighborhood of $i\lambda .$

Let $\omega $ be a weight function, $T\in B\left( X\right) ,$ and let%
\begin{equation*}
E_{T}^{\omega }:=\left\{ x\in X:\exists C_{x}>0,\text{ }\left\Vert
e^{tT}x\right\Vert \leq C_{x}\omega \left( t\right) \text{, \ }\forall t\in 
\mathbb{R}
\right\} .
\end{equation*}%
Then, $E_{T}^{\omega }$ is a linear (non-closed, in general) subspace of $X$%
. If $x\in E_{T}^{\omega }$, then for an arbitrary $\mu \in M_{\omega
}\left( 
\mathbb{R}
\right) ,$ we can define an element $x_{\mu }$ in $X$ by 
\begin{equation*}
x_{\mu }=\int_{%
\mathbb{R}
}e^{tT}xd\mu \left( t\right) \text{.}
\end{equation*}%
Notice that $\mu \mapsto x_{\mu }$ is a bounded linear map from $M_{\omega
}\left( 
\mathbb{R}
\right) $ into $X$ and%
\begin{equation*}
\left\Vert x_{\mu }\right\Vert \leq C_{x}\left\Vert \mu \right\Vert
_{1,\omega }\text{, \ }\forall \mu \in M_{\omega }\left( 
\mathbb{R}
\right) .
\end{equation*}%
Further, from the identity%
\begin{equation*}
e^{tT}x_{\mu }=\int_{%
\mathbb{R}
}e^{\left( t+s\right) T}xd\mu \left( s\right) ,
\end{equation*}%
we can write 
\begin{eqnarray*}
\left\Vert e^{tT}x_{\mu }\right\Vert &\leq &\int_{%
\mathbb{R}
}\left\Vert e^{\left( t+s\right) T}x\right\Vert d\left\vert \mu \right\vert
\left( s\right) \\
&\leq &C_{x}\int_{%
\mathbb{R}
}\omega \left( t+s\right) d\left\vert \mu \right\vert \left( s\right) \\
&\leq &C_{x}\int_{%
\mathbb{R}
}\omega \left( t\right) \omega \left( s\right) d\left\vert \mu \right\vert
\left( s\right) \\
&=&C_{x}\left\Vert \mu \right\Vert _{1,\omega }\omega \left( t\right) \text{%
, \ }\forall t\in 
\mathbb{R}
.
\end{eqnarray*}%
This shows that if $x\in E_{T}^{\omega },$ then $x_{\mu }\in E_{T}^{\omega }$
for every $\mu \in M_{\omega }\left( 
\mathbb{R}
\right) .$ In other words, $E_{T}^{\omega }$ is a $M_{\omega }\left( 
\mathbb{R}
\right) -$module for the map $\left( \mu ,x\right) \mapsto x_{\mu }.$

It is easy to check that 
\begin{equation*}
\left( x_{\mu }\right) _{\nu }=x_{\mu \ast \nu }\text{, \ }\forall \mu ,\nu
\in M_{\omega }\left( 
\mathbb{R}
\right) .
\end{equation*}%
It follows that if $x\in E_{T}^{\omega },$ then 
\begin{equation*}
I_{x}:=\left\{ f\in L_{\omega }^{1}\left( 
\mathbb{R}
\right) :x_{f}=0\right\}
\end{equation*}%
is a closed ideal of $L_{\omega }^{1}\left( 
\mathbb{R}
\right) ,$ where%
\begin{equation*}
x_{f}=\int_{%
\mathbb{R}
}f\left( t\right) e^{tT}xdt\text{.}
\end{equation*}

For a given $x\in E_{T}^{\omega }$, consider the function 
\begin{equation}
u\left( z\right) :=\left\{ 
\begin{tabular}{l}
$\int\limits_{0}^{\infty }e^{-zt}e^{tT}x{dt\text{, \ }\func{Re}z>0\text{;}}$
\\ 
${-}\int\limits_{-\infty }^{0}e^{-zt}e^{tT}x{dt\text{, \ }\func{Re}z<0\text{.%
}}$%
\end{tabular}%
\right.  \label{2.3}
\end{equation}%
It follows from (2.1) that $u\left( z\right) $ is a function analytic on $%
\mathbb{C}
\diagdown i%
\mathbb{R}
$. Let $a:=\func{Re}z>0.$ Then, for an arbitrary $s>0$ we can write%
\begin{equation*}
\left( zI-T\right) \int\limits_{0}^{s}e^{-zt}e^{tT}x{dt}=-\int\limits_{0}^{s}%
\frac{d}{dt}e^{t\left( T-zI\right) }x{dt=x-}e^{s\left( T-zI\right) }x.
\end{equation*}%
Since 
\begin{equation*}
\left\Vert e^{s\left( T-zI\right) }x\right\Vert =e^{-as}\left\Vert
e^{sT}x\right\Vert \leq C_{x}e^{-as}\omega \left( s\right)
\end{equation*}%
and 
\begin{equation*}
\lim_{s\rightarrow +\infty }e^{-as}\omega \left( s\right) =0,
\end{equation*}%
we have 
\begin{equation*}
\left( zI-T\right) u\left( z\right) =x,\text{ \ for all }z\in 
\mathbb{C}
\text{ with }\func{Re}z>0.
\end{equation*}%
Similarly,%
\begin{equation*}
\left( zI-T\right) u\left( z\right) =x,\text{ \ for all }z\in 
\mathbb{C}
\text{ with }\func{Re}z<0.
\end{equation*}%
Hence, 
\begin{equation}
\left( zI-T\right) u\left( z\right) =x\text{, \ }\forall z\in 
\mathbb{C}
\diagdown i%
\mathbb{R}
.  \label{2.4}
\end{equation}%
This clearly implies that $\sigma _{T}\left( x\right) \subset i%
\mathbb{R}
.$

Thus we have the following:

\begin{proposition}
Let $\omega $ be a regular weight. Assume that $T\in B\left( X\right) $ and $%
x\in X$ satisfy the condition $\left\Vert e^{tT}x\right\Vert \leq
C_{x}\omega \left( t\right) $ for all $t\in 
\mathbb{R}
$ and for some constant $C_{x}>0.$ Then $\sigma _{T}\left( x\right) \subset i%
\mathbb{R}
.$
\end{proposition}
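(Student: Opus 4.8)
The plan is to produce an $X$-valued analytic function on all of $\mathbb{C}\setminus i\mathbb{R}$ that solves the local resolvent equation $(zI-T)u(z)=x$; by the definition of $\rho_T(x)$ this gives $\mathbb{C}\setminus i\mathbb{R}\subseteq\rho_T(x)$, and taking complements yields $\sigma_T(x)\subseteq i\mathbb{R}$. The natural candidate is the two-sided Laplace-type transform of the orbit $t\mapsto e^{tT}x$, split across the two open half-planes: set $u(z)=\int_0^{\infty}e^{-zt}e^{tT}x\,dt$ for $\operatorname{Re}z>0$ and $u(z)=-\int_{-\infty}^{0}e^{-zt}e^{tT}x\,dt$ for $\operatorname{Re}z<0$.

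First I would check that these integrals converge. For $a:=\operatorname{Re}z>0$ the growth hypothesis gives $\Vert e^{-zt}e^{tT}x\Vert\le C_x e^{-at}\omega(t)$, and regularity of $\omega$ forces subexponential growth, namely $\log\omega(t)/t\to0$ as $t\to+\infty$ (this is precisely relation (2.1) recorded above); hence the integrand is dominated by an $L^1$-function on $[0,\infty)$, so $u(z)$ is well defined. The symmetric estimate on $(-\infty,0]$ handles $\operatorname{Re}z<0$. Analyticity on each open half-plane then follows from the standard reasoning: the integrals converge uniformly for $z$ in compact subsets of the half-plane, so $u$ is locally a uniform limit of the holomorphic partial integrals, and Morera's theorem (equivalently, differentiation under the integral sign) applies.

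Next I would verify the resolvent identity. Fix $z$ with $a:=\operatorname{Re}z>0$. Since $T$ is bounded it commutes with $e^{tT}$ and may be moved under the integral sign, and $\frac{d}{dt}e^{t(T-zI)}x=(T-zI)e^{t(T-zI)}x$, so integrating from $0$ to $s$ gives $(zI-T)\int_0^s e^{-zt}e^{tT}x\,dt=x-e^{s(T-zI)}x$. The boundary term satisfies $\Vert e^{s(T-zI)}x\Vert=e^{-as}\Vert e^{sT}x\Vert\le C_x e^{-as}\omega(s)\to0$ as $s\to+\infty$; letting $s\to\infty$ and using continuity of $zI-T$ yields $(zI-T)u(z)=x$ for $\operatorname{Re}z>0$. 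An entirely analogous computation on $[-s,0]$ gives the same identity for $\operatorname{Re}z<0$, so $(zI-T)u(z)=x$ for every $z\in\mathbb{C}\setminus i\mathbb{R}$, which is what we wanted.

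There is no serious obstacle here. The only two points that require any care are the convergence of the defining integrals — which is exactly where the hypothesis that $\omega$ be a \emph{regular} weight enters, through the subexponential bound (2.1) — and the routine justifications that one may differentiate, respectively move the bounded operator $zI-T$, inside the integral. Everything else is the familiar Laplace-transform formula for the resolvent adapted to the orbit of the single vector $x$.
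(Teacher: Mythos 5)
Your proposal is correct and follows essentially the same route as the paper: the paper also constructs the half-plane Laplace transforms $u(z)$ of the orbit $t\mapsto e^{tT}x$ (its equation (2.3)), uses the subexponential bound (2.1) from regularity of $\omega$ for convergence and analyticity, and verifies $(zI-T)u(z)=x$ by integrating $\frac{d}{dt}e^{t(T-zI)}x$ over $[0,s]$ and letting $s\to+\infty$. No substantive differences.
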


Now, assume that $T$ has SVEP. We claim that $\sigma _{T}\left( x\right) $
consists of all $\lambda \in i%
\mathbb{R}
$ for which the function $u\left( z\right) $ (which is defined by (2.3)) has
no analytic extension to a neighborhood of $\lambda $. To see this, let $%
v\left( z\right) $ be the analytic extension of $u\left( z\right) $ to a
neighborhood $U_{\lambda }$ of $\lambda \in i%
\mathbb{R}
.$ It follows from the identity (2.4) that the function%
\begin{equation*}
w\left( z\right) :=\left( zI-T\right) v\left( z\right) -x
\end{equation*}%
vanishes on $U_{\lambda }^{+}:=\left\{ z\in U_{\lambda }:\func{Re}%
z>0\right\} $ and on $U_{\lambda }^{-}:=\left\{ z\in U_{\lambda }:\func{Re}%
z<0\right\} .$ By uniqueness theorem, $w\left( z\right) =0$ for all $z\in
U_{\lambda }.$ So we have 
\begin{equation*}
\left( zI-T\right) v\left( z\right) =x\text{, \ }\forall z\in U_{\lambda }.
\end{equation*}%
This shows that $\lambda \in \rho _{T}\left( x\right) .$ If $\lambda \in
\rho _{T}\left( x\right) \cap i%
\mathbb{R}
,$ then there exists a neighborhood $U_{\lambda }$ of $\lambda $ with $%
v\left( z\right) $ analytic on $U_{\lambda }$ having values in $X$ such that 
\begin{equation*}
\left( zI-T\right) v\left( z\right) =x\text{, \ }\forall z\in U_{\lambda }.
\end{equation*}%
By (2.4), 
\begin{equation*}
\left( zI-T\right) \left( u\left( z\right) -v\left( z\right) \right) =0,%
\text{ \ }\forall z\in U_{\lambda }^{+},\text{ }\forall z\in U_{\lambda
}^{-}.
\end{equation*}%
Since $T$ has SVEP, we have 
\begin{equation*}
u\left( z\right) =v\left( z\right) \text{, \ }\forall z\in U_{\lambda }^{+},%
\text{ }\forall z\in U_{\lambda }^{-}.
\end{equation*}%
This shows that $u\left( z\right) $ can be analytically extended to a
neighborhood of $\lambda $.

Let $x\in E_{T}^{\omega }$ be given. For an arbitrary $\varphi \in X^{\ast
}, $ define a function $\varphi _{x}$ on $%
\mathbb{R}
$ by 
\begin{equation*}
\varphi _{x}\left( t\right) =\langle \varphi ,e^{tT}x\rangle .
\end{equation*}%
Then $\varphi _{x}$ is continuous and 
\begin{equation*}
\left\vert \varphi _{x}\left( t\right) \right\vert \leq C_{x}\left\Vert
\varphi \right\Vert \omega \left( t\right) \text{, \ }\forall t\in 
\mathbb{R}
.
\end{equation*}%
Consequently, $\varphi _{x}\in L_{\omega }^{\infty }\left( 
\mathbb{R}
\right) $. Taking into account the identity (2.3), we have 
\begin{equation*}
\langle \varphi ,u\left( z\right) \rangle =\left\{ 
\begin{tabular}{l}
$\int\limits_{0}^{\infty }e^{-zt}{\varphi _{x}\left( t\right) dt\text{, \ }%
\func{Re}z>0\text{;}}$ \\ 
${-}\int\limits_{-\infty }^{0}e^{-zt}{\varphi _{x}\left( t\right) dt\text{,
\ }\func{Re}z<0}$.%
\end{tabular}%
\right.
\end{equation*}%
This shows that the function $z\rightarrow \langle \varphi ,u\left( z\right)
\rangle $ is the Carleman transform of ${\varphi _{x}.}$ It follows that 
\begin{equation*}
i\text{sp}_{B}\left( \varphi _{x}\right) \subseteq \sigma _{T}\left(
x\right) \text{\ \ }\left( \forall \varphi \in X^{\ast }\right)
\end{equation*}%
and hence 
\begin{equation*}
\overline{\bigcup_{\varphi \in X^{\ast }}\text{sp}_{B}\left( \varphi
_{x}\right) }\subseteq -i\sigma _{T}\left( x\right) .
\end{equation*}%
To show the reverse inclusion, assume that $\lambda _{0}\in 
\mathbb{R}
$ and 
\begin{equation*}
\lambda _{0}\notin \overline{\bigcup_{\varphi \in X^{\ast }}\text{sp}%
_{B}\left( \varphi _{x}\right) }.
\end{equation*}%
Then there exist a neighborhood $U$ of $\overline{\bigcup_{\varphi \in
X^{\ast }}\text{sp}_{B}\left( \varphi _{x}\right) }$ and $\varepsilon >0$
such that 
\begin{equation*}
\left( \lambda _{0}-\varepsilon ,\lambda _{0}+\varepsilon \right) \cap 
\overline{U}=\emptyset .
\end{equation*}%
Since the algebra $L_{\omega }^{1}\left( 
\mathbb{R}
\right) $ is regular, there exists a function $f\in L_{\omega }^{1}\left( 
\mathbb{R}
\right) $ such that $\widehat{f}=1$ on $\left[ \lambda _{0}-\varepsilon
/2,\lambda _{0}+\varepsilon /2\right] $ and $\widehat{f}=0$ on $\overline{U}%
. $ Notice that $\widehat{f}$ vanishes in a neighborhood of sp$_{B}\left(
\varphi _{x}\right) $ and supp$\widehat{f}\subseteq \left[ \lambda
_{0}-\varepsilon ,\lambda _{0}+\varepsilon \right] .$ Consequently, $f$
belongs to the smallest ideal of $L_{\omega }^{1}\left( 
\mathbb{R}
\right) $ whose hull is sp$_{B}\left( \varphi _{x}\right) .$ Since 
\begin{equation*}
\text{sp}_{B}\left( \varphi _{x}\right) =\text{hull}\left\{ f\in L_{\omega
}^{1}\left( 
\mathbb{R}
\right) :f\ast \varphi _{x}=0\right\} ,
\end{equation*}%
we have that $\left( \lambda _{0}-\varepsilon /2,\lambda _{0}+\varepsilon
/2\right) \subset 
\mathbb{R}
\setminus $sp$_{B}\left( \varphi _{x}\right) $ for every $\varphi \in
X^{\ast }.$ It follows that the function $z\rightarrow \langle \varphi
,u\left( z\right) \rangle $ can be analytically extended to $i\left( \lambda
_{0}-\varepsilon /2,\lambda _{0}+\varepsilon /2\right) $ for every $\varphi
\in X^{\ast }.$ Hence, $u\left( z\right) $ can be analytically extended to $%
i\left( \lambda _{0}-\varepsilon /2,\lambda _{0}+\varepsilon /2\right) $ and
therefore $i\lambda _{0}\notin \sigma _{T}\left( x\right) $ or $\lambda
_{0}\notin -i\sigma _{T}\left( x\right) .$ Thus we have 
\begin{equation*}
\overline{\bigcup_{\varphi \in X^{\ast }}\text{sp}_{B}\left( \varphi
_{x}\right) }=-i\sigma _{T}\left( x\right) .
\end{equation*}%
Further, it is easy to see that 
\begin{equation*}
I_{x}=\bigcap\limits_{\varphi \in X^{\ast }}\mathcal{I}_{\widetilde{\varphi
_{x}}},
\end{equation*}%
where $\widetilde{\varphi _{x}}\left( t\right) =\varphi _{x}\left( -t\right)
.$ Taking into account that 
\begin{equation*}
\text{sp}_{B}\left( \widetilde{\varphi _{x}}\right) =\left\{ -\lambda
:\lambda \in \text{sp}_{B}\left( \varphi _{x}\right) \right\} ,
\end{equation*}%
by (2.2) we can write 
\begin{equation*}
\text{hull}\left( I_{x}\right) =\overline{\bigcup_{\varphi \in X^{\ast }}%
\text{hull}\left( \mathcal{I}_{\widetilde{\varphi _{x}}}\right) }=\overline{%
\bigcup_{\varphi \in X^{\ast }}\text{sp}_{B}\left( \widetilde{\varphi _{x}}%
\right) }=i\sigma _{T}\left( x\right) .
\end{equation*}

Thus we have the following:

\begin{proposition}
Let $\omega $ be a regular weight. Assume that $T\in B\left( X\right) $ has
SVEP and $x\in X$ satisfies the condition $\left\Vert e^{tT}x\right\Vert
\leq C_{x}\omega \left( t\right) $ for all $t\in 
\mathbb{R}
$ and for some constant $C_{x}>0.$ Then 
\begin{equation*}
i\sigma _{T}\left( x\right) =\mathnormal{hull}\left( I_{x}\right) .
\end{equation*}
\end{proposition}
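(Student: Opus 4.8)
The plan is to prove $i\sigma _{T}\left( x\right) =\text{hull}\left(
I_{x}\right) $ by establishing the two inclusions separately, in each case
transferring the $X$-valued analysis to the scalar functions $\varphi _{x}$
and then invoking the Carleman-transform description of the Beurling spectrum
together with the SVEP characterization of $\sigma _{T}\left( x\right) $ that
was obtained just above the statement (namely, $\sigma _{T}\left( x\right) $
consists of the $i\lambda $, $\lambda \in
\mathbb{R}
$, at which the function $u\left( z\right) $ of (2.3) has no analytic
continuation).

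First I would fix $x\in E_{T}^{\omega }$ and, for each $\varphi \in X^{\ast
} $, set $\varphi _{x}\left( t\right) =\langle \varphi ,e^{tT}x\rangle $; the
growth hypothesis forces $\varphi _{x}\in L_{\omega }^{\infty }\left(
\mathbb{R}
\right) $. Comparing the integral formula (2.3) for $u\left( z\right) $ with
the definition of the Carleman transform shows that $z\mapsto \langle
\varphi ,u\left( z\right) \rangle $ is precisely the Carleman transform of $
\varphi _{x}$. Since $\lambda \in \text{sp}_{B}\left( \varphi _{x}\right) $
exactly when this transform has no analytic continuation across $i\lambda $,
this gives $i\,\text{sp}_{B}\left( \varphi _{x}\right) \subseteq \sigma
_{T}\left( x\right) $ for every $\varphi $, hence $\overline{\bigcup_{\varphi
\in X^{\ast }}\text{sp}_{B}\left( \varphi _{x}\right) }\subseteq -i\sigma
_{T}\left( x\right) $.

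For the reverse inclusion I would argue contrapositively. If $\lambda _{0}\in
\mathbb{R}
$ lies outside $\overline{\bigcup_{\varphi }\text{sp}_{B}\left( \varphi
_{x}\right) }$, then regularity of $L_{\omega }^{1}\left(
\mathbb{R}
\right) $ yields $f\in L_{\omega }^{1}\left(
\mathbb{R}
\right) $ with $\text{supp}\widehat{f}$ compact, $\widehat{f}\equiv 1$ on a
neighborhood of $\lambda _{0}$, and $\widehat{f}\equiv 0$ on a neighborhood
of that closure. Because $\mathcal{I}_{\varphi _{x}}$ is a closed ideal with
hull $\text{sp}_{B}\left( \varphi _{x}\right) $ and such an $f$ lies in the
\emph{smallest} closed ideal with that hull, we get $f\ast \varphi _{x}=0$,
so a whole neighborhood of $\lambda _{0}$ avoids $\text{sp}_{B}\left(
\varphi _{x}\right) $ for every $\varphi $; hence $z\mapsto \langle \varphi
,u\left( z\right) \rangle $ extends analytically across $i\lambda _{0}$ for
every $\varphi $. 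The delicate point is to upgrade these scalar
continuations to an $X$-valued one, which is legitimate because a weakly
analytic Banach-space-valued function is analytic; thus $u$ itself extends
across $i\lambda _{0}$ and $i\lambda _{0}\notin \sigma _{T}\left( x\right) $.
This yields $\overline{\bigcup_{\varphi }\text{sp}_{B}\left( \varphi
_{x}\right) }=-i\sigma _{T}\left( x\right) $.

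Finally I would identify $I_{x}$. Writing $\widetilde{\varphi _{x}}\left(
t\right) =\varphi _{x}\left( -t\right) $ and using that $x_{f}=0$ is
equivalent to $\langle \varphi ,e^{sT}x_{f}\rangle =0$ for all $s\in
\mathbb{R}
$ and all $\varphi $ --- which, after noting $e^{sT}x_{f}=x_{\tau _{s}f}$
with $\left( \tau _{s}f\right) \left( t\right) =f\left( t-s\right) $, becomes
$\left( f\ast \widetilde{\varphi _{x}}\right) \left( s\right) =0$ for all $s$
--- one obtains $I_{x}=\bigcap_{\varphi \in X^{\ast }}\mathcal{I}_{\widetilde{
\varphi _{x}}}$. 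Since $L_{\omega }^{1}\left(
\mathbb{R}
\right) $ is commutative, semisimple and regular, formula (2.2) gives $\text{
hull}\left( I_{x}\right) =\overline{\bigcup_{\varphi }\text{hull}\left(
\mathcal{I}_{\widetilde{\varphi _{x}}}\right) }=\overline{\bigcup_{\varphi }
\text{sp}_{B}\left( \widetilde{\varphi _{x}}\right) }$, and since $\text{sp}
_{B}\left( \widetilde{\varphi _{x}}\right) =-\text{sp}_{B}\left( \varphi
_{x}\right) $ the previous step makes this equal to $-\left( -i\sigma
_{T}\left( x\right) \right) =i\sigma _{T}\left( x\right) $. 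The main obstacle
is the reverse inclusion: one must correctly place $f$ in the appropriate
minimal ideal via regularity and the Tauberian property, and then pass from
the weak to the strong analytic continuation of $u$.
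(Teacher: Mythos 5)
Your proposal is correct and follows essentially the same route as the paper, whose proof of this proposition is the discussion immediately preceding its statement: identify $\langle \varphi ,u\left( z\right) \rangle$ with the Carleman transform of $\varphi _{x}$, use the SVEP characterization of $\sigma _{T}\left( x\right) $ via non-extendability of $u$, obtain $\overline{\bigcup_{\varphi }\mathrm{sp}_{B}\left( \varphi _{x}\right) }=-i\sigma _{T}\left( x\right) $ by the same regularity/minimal-ideal argument, and then pass to $I_{x}=\bigcap_{\varphi }\mathcal{I}_{\widetilde{\varphi _{x}}}$ and formula (2.2). The only difference is cosmetic: you explicitly name the weak-to-strong analyticity upgrade for the extension of $u$, a step the paper asserts without comment.
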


\begin{corollary}
Assume that the hypotheses of Proposition 2.3 are satisfied. The following
assertions hold for every $f\in L_{\omega }^{1}\left( 
\mathbb{R}
\right) $:

$\left( a\right) $ $\sigma _{T}\left( x_{f}\right) \subseteq \sigma
_{T}\left( x\right) .$

$\left( b\right) $ $i\sigma _{T}\left( x_{f}\right) \subseteq \mathnormal{%
supp}\widehat{f}.$
\end{corollary}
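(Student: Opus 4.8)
The plan is to reduce both assertions to Proposition 2.3, using only the convolution identity $(x_{\mu})_{\nu}=x_{\mu\ast\nu}$ together with the semisimplicity and regularity of $L_{\omega}^{1}(\mathbb{R})$. The first thing to record is that the hypotheses of Proposition 2.3 are inherited by $x_{f}$: as observed just before Proposition 2.2, $x\in E_{T}^{\omega}$ forces $x_{\mu}\in E_{T}^{\omega}$ for every $\mu\in M_{\omega}(\mathbb{R})$, in particular $x_{f}\in E_{T}^{\omega}$. Hence Proposition 2.3 applies verbatim to $x_{f}$, giving $i\sigma_{T}(x_{f})=\mathrm{hull}(I_{x_{f}})$ and $i\sigma_{T}(x)=\mathrm{hull}(I_{x})$. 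Since the operation $S\mapsto\mathrm{hull}(S)$ reverses inclusions of ideals, part $(a)$ will follow once I show $I_{x}\subseteq I_{x_{f}}$.

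For that inclusion, let $g\in I_{x}$, i.e.\ $x_{g}=0$. Using commutativity of convolution in $L_{\omega}^{1}(\mathbb{R})$ and the identity $(x_{\mu})_{\nu}=x_{\mu\ast\nu}$, I compute $(x_{f})_{g}=x_{f\ast g}=x_{g\ast f}=(x_{g})_{f}$; and since $x_{g}=0$ and $0_{f}=\int_{\mathbb{R}}f(t)e^{tT}0\,dt=0$, this gives $(x_{f})_{g}=0$, so $g\in I_{x_{f}}$. Thus $I_{x}\subseteq I_{x_{f}}$, hence $\mathrm{hull}(I_{x_{f}})\subseteq\mathrm{hull}(I_{x})$, i.e.\ $i\sigma_{T}(x_{f})\subseteq i\sigma_{T}(x)$, which is $(a)$.

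For part $(b)$ I would argue by contraposition at the level of hulls. Take $\lambda\in\mathbb{R}\setminus\mathrm{supp}\,\widehat{f}$ and choose an open neighborhood $W$ of $\lambda$ disjoint from $\mathrm{supp}\,\widehat{f}$, so that $\widehat{f}\equiv0$ on $W$. By regularity of $L_{\omega}^{1}(\mathbb{R})$ there is $g\in L_{\omega}^{1}(\mathbb{R})$ with $\widehat{g}(\lambda)=1$ and $\mathrm{supp}\,\widehat{g}\subseteq W$. Then $\widehat{g\ast f}=\widehat{g}\,\widehat{f}\equiv0$ on all of $\mathbb{R}$ (the product vanishes on $W$ because $\widehat{f}=0$ there, and off $W$ because $\widehat{g}=0$ there), and since $L_{\omega}^{1}(\mathbb{R})$ is semisimple this forces $g\ast f=0$. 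Consequently $(x_{f})_{g}=x_{g\ast f}=0$, so $g\in I_{x_{f}}$, while $\widehat{g}(\lambda)=1\neq0$; hence $\lambda\notin\mathrm{hull}(I_{x_{f}})$. This shows $\mathrm{hull}(I_{x_{f}})\subseteq\mathrm{supp}\,\widehat{f}$, i.e.\ $i\sigma_{T}(x_{f})\subseteq\mathrm{supp}\,\widehat{f}$, which is $(b)$.

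I do not expect a genuine obstacle here; the argument is bookkeeping with hulls and the $M_{\omega}(\mathbb{R})$-module structure on $E_{T}^{\omega}$. The only points that need attention are (i) verifying at the outset that $x_{f}\in E_{T}^{\omega}$ so that Proposition 2.3 is legitimately applicable to $x_{f}$ as well as to $x$, and (ii) invoking semisimplicity — rather than spectral synthesis — to pass from $\widehat{g\ast f}\equiv0$ to $g\ast f=0$, which is precisely what makes $(b)$ hold for every $\alpha\geq0$ and not just in the synthesis range $0\leq\alpha<1$.
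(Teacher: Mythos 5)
Your proposal is correct and follows essentially the same route as the paper: part (a) via the inclusion $I_{x}\subseteq I_{x_{f}}$ obtained from $\left( x_{f}\right) _{g}=x_{f\ast g}=\left( x_{g}\right) _{f}=0$, and part (b) via semisimplicity to get $f\ast g=0$ and then Proposition 2.3 to identify $\mathrm{hull}\left( I_{x_{f}}\right) $ with $i\sigma _{T}\left( x_{f}\right) $. The only differences are cosmetic: you state (b) contrapositively and make explicit the regularity step separating a point from $\mathrm{supp}\widehat{f}$, which the paper leaves implicit, and you record up front that $x_{f}\in E_{T}^{\omega }$ so that Proposition 2.3 legitimately applies to $x_{f}$.
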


\begin{proof}
(a) By Proposition 2.3, it suffices to show that $I_{x}\subseteq I_{x_{f}}.$
Indeed, if $g\in I_{x},$ then as $x_{g}=0$ we have 
\begin{equation*}
\left( x_{f}\right) _{g}=x_{f\ast g}=\left( x_{g}\right) _{f}=0.
\end{equation*}
This shows that $g\in I_{x_{f}}.$

(b) Assume that the Fourier transform of $g\in L_{\omega }^{1}\left( 
\mathbb{R}
\right) $ vanishes on supp$\widehat{f}.$ Then as $f\ast g=0,$ we have 
\begin{equation*}
\left( x_{f}\right) _{g}=x_{f\ast g}=0
\end{equation*}
and therefore, $g\in I_{x_{f}}.$ Consequently, $\widehat{g}$ vanishes on hull%
$\left( I_{x_{f}}\right) .$ By Proposition 2.3, $\widehat{g}$ vanishes on $%
i\sigma _{T}\left( x_{f}\right) .$
\end{proof}

For $f\in L_{\omega }^{1}\left( 
\mathbb{R}
\right) $ and $s\in 
\mathbb{R}
,$ let $f_{s}\left( t\right) :=f\left( t-s\right) .$ Let $e_{n}:=2n\chi _{%
\left[ -1/n,1/n\right] }$ $\left( n\in 
\mathbb{N}
\right) $, where $\chi _{\left[ -1/n,1/n\right] }$ is the characteristic
function of the interval $\left[ -1/n,1/n\right] .$ If $K:=\sup_{t\in \left[
-1,1\right] }\omega \left( t\right) ,$ then $\left\Vert e_{n}\right\Vert
_{\omega }\leq K$ for all $n\in 
\mathbb{N}
.$ On the other hand, by continuity of the mapping $s\mapsto f_{s}$, we have 
\begin{equation*}
\lim_{n\rightarrow \infty }\left\Vert f\ast e_{n}-f\right\Vert _{1,\omega
}=0.
\end{equation*}%
Consequently, $\left\{ e_{n}\right\} $ is a bounded approximate identity
(b.a.i.) for $L_{\omega }^{1}\left( 
\mathbb{R}
\right) .$ If $x\in E_{T}^{\omega },$ from the identity 
\begin{equation*}
x_{e_{n}}-x=\int_{-1/n}^{1/n}e_{n}\left( t\right) \left( e^{tT}x-x\right) dt,
\end{equation*}%
it follows that $x_{e_{n}}\rightarrow x.$ Similarly, $x_{f\ast
e_{n}}\rightarrow x_{f}$ for every $f\in L_{\omega }^{1}\left( 
\mathbb{R}
\right) .$

\begin{proposition}
Let $\omega $ be a regular weight and $x\in X.$ Assume that $T\in B\left(
X\right) $ has SVEP and $\left\Vert e^{tT}x\right\Vert \leq C_{x}\omega
\left( t\right) $ for all $t\in 
\mathbb{R}
$ and for some constant $C_{x}>0.$ For an arbitrary $f\in L_{\omega
}^{1}\left( 
\mathbb{R}
\right) ,$ the following assertions hold:

$\left( a\right) $ If $x_{f}=0,$ then $\widehat{f}$ vanishes on $i\sigma
_{T}\left( x\right) .$

$\left( b\right) $ If $\widehat{f}$ vanishes in a neighborhood of $i\sigma
_{T}\left( x\right) ,$ then $x_{f}=0$.

$\left( c\right) $ If $\widehat{f}=1$ in a neighborhood of $i\sigma
_{T}\left( x\right) ,$ then $x_{f}=x$.
\end{proposition}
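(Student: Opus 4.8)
The plan is to obtain (a) directly from Proposition 2.3, to reduce (b) to the fact that -- under SVEP -- a vector with empty local spectrum must be zero, and then to deduce (c) from (b) by passing to the bounded approximate identity $\{e_n\}$ introduced just before the statement. The cleanest route for (b) is through the two inclusions of Corollary 2.4 rather than through an ideal-membership argument; note that the hypothesis in (b) is that $\widehat f$ vanish on a whole \emph{neighborhood} of $i\sigma_T(x)$, not merely on $i\sigma_T(x)$ itself, and this is exactly what lets one sidestep spectral synthesis -- which matters, since for $\alpha\geq 1$ even a single point need not be a set of synthesis for $L^1_\omega(\mathbb{R})$. I therefore do not foresee a genuine obstacle, only the need to keep the bookkeeping straight.

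For (a): $x_f=0$ means precisely $f\in I_x$, and by Proposition 2.3 we have $\text{hull}(I_x)=i\sigma_T(x)$, so $\widehat f$ vanishes on $i\sigma_T(x)$. For (b): after replacing the given neighborhood by an open set $U$ with $i\sigma_T(x)\subseteq U$ and $\widehat f\equiv 0$ on $U$, one has $\text{supp}\widehat f\subseteq\mathbb{R}\setminus U$, hence $\text{supp}\widehat f\cap i\sigma_T(x)=\emptyset$. Corollary 2.4(a) gives $i\sigma_T(x_f)\subseteq i\sigma_T(x)$ and Corollary 2.4(b) gives $i\sigma_T(x_f)\subseteq\text{supp}\widehat f$; together these yield $\sigma_T(x_f)=\emptyset$, and since $T$ has SVEP a nonzero vector has nonempty local spectrum, so $x_f=0$.

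For (c): suppose $\widehat f\equiv 1$ on an open neighborhood $V$ of $i\sigma_T(x)$, and recall from the discussion preceding the proposition that $x_{e_n}\to x$ and $x_{f\ast e_n}\to x_f$ in $X$. For each $n$ the function $f\ast e_n-e_n$ belongs to $L^1_\omega(\mathbb{R})$ and its Fourier transform $(\widehat f-1)\widehat{e_n}$ vanishes on the neighborhood $V$ of $i\sigma_T(x)$, so part (b) applies and gives $x_{f\ast e_n}-x_{e_n}=x_{f\ast e_n-e_n}=0$; letting $n\to\infty$ yields $x_f=x$. The only steps that need a little care are the passage to an open neighborhood in (b) and the (routine) observation that $\mu\mapsto x_\mu$ is linear, so that $x_{f\ast e_n-e_n}=x_{f\ast e_n}-x_{e_n}$; it should also be emphasized that the SVEP hypothesis enters essentially in (b), hence in (c) as well.
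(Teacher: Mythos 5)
Your proposal is correct; parts (a) and (c) coincide with the paper's argument (part (a) reads off $\mathrm{hull}(I_x)=i\sigma_T(x)$ from Proposition 2.3, and part (c) applies part (b) to $f\ast e_n-e_n$ and passes to the limit along the bounded approximate identity exactly as the paper does). Part (b), however, takes a genuinely different route. The paper argues ideal-theoretically: for any $g$ with compactly supported Fourier transform, $f\ast g$ lies in $J_\omega\left(i\sigma_T(x)\right)$, the smallest closed ideal with hull $i\sigma_T(x)$, hence in $I_x$ by Proposition 2.3; the Tauberian property then upgrades $x_{f\ast g}=0$ to all $g$, and the b.a.i.\ gives $x_f=0$. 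You instead combine the two inclusions of Corollary 2.4, $\sigma_T(x_f)\subseteq\sigma_T(x)$ and $i\sigma_T(x_f)\subseteq\mathrm{supp}\,\widehat{f}$, with the neighborhood hypothesis (which forces $\mathrm{supp}\,\widehat{f}\cap i\sigma_T(x)=\emptyset$) to conclude $\sigma_T(x_f)=\emptyset$, and then invoke the SVEP fact that nonzero vectors have nonempty local spectrum. Both arguments are sound and both ultimately rest on Proposition 2.3 and SVEP; yours is shorter and sidesteps the $J_\omega$/Tauberian machinery entirely, at the price of leaning on the cited nonemptiness result \cite[Proposition 1.2.16]{12}, whereas the paper's version stays entirely inside the Beurling-algebra module structure and makes visible exactly where the neighborhood hypothesis substitutes for spectral synthesis. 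Your closing remarks on the necessity of the open neighborhood and on linearity of $\mu\mapsto x_\mu$ are the right points to flag.
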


\begin{proof}
(a) By Proposition 2.3, $i\sigma _{T}\left( x\right) =$hull$\left(
I_{x}\right) $ and therefore $I_{x}\subseteq I_{\omega }\left( i\sigma
_{T}\left( x\right) \right) .$ This clearly implies (a).

(b) Let $g\in L_{\omega }^{1}\left( 
\mathbb{R}
\right) $ be such that supp$\widehat{g}$ is compact. Then $f\ast g\in
J_{\omega }\left( i\sigma _{T}\left( x\right) \right) $ and therefore $f\ast
g\in I_{x}.$ So we have $x_{f\ast g}=0.$ Since the algebra $L_{\omega
}^{1}\left( 
\mathbb{R}
\right) $ is Tauberian, $x_{f\ast g}=0$ for all $g\in L_{\omega }^{1}\left( 
\mathbb{R}
\right) .$ It follows that $x_{f\ast e_{n}}=0$ for all $n,$ where $\left\{
e_{n}\right\} $ is a b.a.i. for $L_{\omega }^{1}\left( 
\mathbb{R}
\right) .$ As $n\rightarrow \infty ,$ we have $x_{f}=0.$

(c) Since the Fourier transform of $f\ast e_{n}-e_{n}$ vanishes in a
neighborhood of $i\sigma _{T}\left( x\right) ,$ by (b), $x_{f\ast
e_{n}}=x_{e_{n}}$. As $n\rightarrow \infty ,$ we have $x_{f}=x.$
\end{proof}

As an immediate consequence of Proposition 2.5, we have the following:

\begin{corollary}
Assume that the hypotheses of Proposition 2.5 are satisfied and $f\in
L_{\omega }^{1}\left( 
\mathbb{R}
\right) .$ If $i\sigma _{T}\left( x\right) $ is a set of synthesis for $%
L_{\omega }^{1}%
\mathbb{R}
,$ then:

$\left( a\right) $ $x_{f}=0$ if and only if $\widehat{f}$ vanishes on $%
i\sigma _{T}\left( x\right) .$

$\left( b\right) $ If $\widehat{f}=1$ on $i\sigma _{T}\left( x\right) ,$
then $x_{f}=x$.
\end{corollary}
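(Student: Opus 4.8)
The plan is to obtain both assertions as direct consequences of Proposition 2.5, the definition of a set of synthesis, and the boundedness of the map $\mu \mapsto x_{\mu }$; no new machinery is needed.

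For $\left( a\right) $, the implication ``$x_{f}=0\Rightarrow \widehat{f}$ vanishes on $i\sigma _{T}\left( x\right) $'' is precisely Proposition 2.5$\left( a\right) $, so I would simply quote it. For the converse, set $S:=i\sigma _{T}\left( x\right) $ and suppose $\widehat{f}$ vanishes on $S$, i.e.\ $f\in I_{\omega }\left( S\right) $. Since $S$ is a set of synthesis, $I_{\omega }\left( S\right) =J_{\omega }\left( S\right) $, and the latter is by definition the $\left\Vert \cdot \right\Vert _{1,\omega }$-closure of the set of those $g\in L_{\omega }^{1}\left( \mathbb{R}\right) $ with $\text{supp}\,\widehat{g}$ compact and $\text{supp}\,\widehat{g}\cap S=\emptyset $. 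Choosing an approximating sequence $g_{n}\to f$ from this set, I note that for each $n$ the set $\text{supp}\,\widehat{g_{n}}$ is closed and disjoint from $S$, so its complement is an open neighborhood of $S$ on which $\widehat{g_{n}}$ vanishes; hence Proposition 2.5$\left( b\right) $ yields $x_{g_{n}}=0$. Finally, from $\left\Vert x_{f}-x_{g_{n}}\right\Vert =\left\Vert x_{f-g_{n}}\right\Vert \leq C_{x}\left\Vert f-g_{n}\right\Vert _{1,\omega }\to 0$ I conclude $x_{f}=0$.

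For $\left( b\right) $, I would use the bounded approximate identity $\left\{ e_{n}\right\} $ introduced just before Proposition 2.5. The Fourier transform of $f\ast e_{n}-e_{n}$ equals $\left( \widehat{f}-1\right) \widehat{e_{n}}$, which vanishes on $S$ because $\widehat{f}=1$ on $S$ by hypothesis. Applying part $\left( a\right) $ (now available) to $f\ast e_{n}-e_{n}\in L_{\omega }^{1}\left( \mathbb{R}\right) $ gives $x_{f\ast e_{n}}=x_{e_{n}}$ for every $n$. Letting $n\to \infty $ and invoking the two limit relations $x_{f\ast e_{n}}\to x_{f}$ and $x_{e_{n}}\to x$ already recorded in the text, I obtain $x_{f}=x$.

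The only point needing a moment's care is the passage from ``$\text{supp}\,\widehat{g_{n}}\cap S=\emptyset $'' to ``$\widehat{g_{n}}$ vanishes in a neighborhood of $S$'', which is what Proposition 2.5$\left( b\right) $ literally requires; this is immediate since $\text{supp}\,\widehat{g_{n}}$ is closed. Apart from that, the corollary is a routine unwinding of Proposition 2.5 and I anticipate no genuine obstacle.
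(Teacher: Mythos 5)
Your proof is correct, and it is essentially the argument the paper intends: the paper states this corollary as an immediate consequence of Proposition 2.5, and your unwinding (Proposition 2.5(a) for one direction, the identity $I_{\omega}(S)=J_{\omega}(S)$ plus density and the continuity of $f\mapsto x_{f}$ for the converse, and the bounded approximate identity for part (b), exactly as in the proof of Proposition 2.5(c)) is the natural way to make that explicit. The only remark worth adding is that the converse in (a) can be obtained even more directly by noting that Proposition 2.3 gives $\mathrm{hull}(I_{x})=i\sigma_{T}(x)$, hence $J_{\omega}(i\sigma_{T}(x))\subseteq I_{x}$ by minimality, so synthesis yields $I_{\omega}(i\sigma_{T}(x))\subseteq I_{x}$ without any approximating sequence.
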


By $S\left( 
\mathbb{R}
\right) $ we denote the set of all rapidly decreasing functions on $%
\mathbb{R}
,$ i.e. the set of all infinitely differentiable functions $\phi $ on $%
\mathbb{R}
$ such that 
\begin{equation*}
\lim_{\left\vert t\right\vert \rightarrow \infty }\left\vert t^{n}\phi
^{\left( k\right) }\left( t\right) \right\vert =0\text{, \ }\forall
n,k=0,1,2,....
\end{equation*}%
(here, $n$ can be replaced by any $\alpha \geq 0$)$.$ It can be seen that if 
$\omega $ is a polynomial weight, then $S\left( 
\mathbb{R}
\right) \subset L_{\omega }^{1}\left( 
\mathbb{R}
\right) .$

\begin{lemma}
Assume that $T\in B\left( X\right) $ and $x\in X$ satisfy the condition 
\begin{equation*}
\left\Vert e^{tT}x\right\Vert \leq C_{x}\left( 1+\left\vert t\right\vert
\right) ^{\alpha }\text{ }\left( \alpha \geq 0\right) ,
\end{equation*}
for all $t\in 
\mathbb{R}
$ and for some constant $C_{x}>0.$ Then for an arbitrary $\phi \in S\left( 
\mathbb{R}
\right) ,$%
\begin{equation*}
x_{\phi ^{\left( k\right) }}=\left( -1\right) ^{k}T^{k}x_{\phi }\text{, \ }%
\forall k\in 
\mathbb{N}
.
\end{equation*}
\end{lemma}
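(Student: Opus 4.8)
The plan is a one-step integration by parts followed by induction on $k$. First note that since $\omega(t)=(1+|t|)^{\alpha}$ is a polynomial weight, $S(\mathbb{R})\subset L_{\omega}^{1}(\mathbb{R})$, and $S(\mathbb{R})$ is closed under differentiation, so every $x_{\phi^{(k)}}$ is a well-defined element of $X$ given by the absolutely convergent (Bochner) integral $\int_{\mathbb{R}}\phi^{(k)}(t)e^{tT}x\,dt$. The heart of the matter is the case $k=1$.

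For $k=1$, I would start from the classical identity $\frac{d}{dt}\bigl(e^{tT}x\bigr)=Te^{tT}x$ and integrate by parts on $[-R,R]$:
\begin{equation*}
\int_{-R}^{R}\phi'(t)e^{tT}x\,dt=\bigl[\phi(t)e^{tT}x\bigr]_{-R}^{R}-\int_{-R}^{R}\phi(t)Te^{tT}x\,dt.
\end{equation*}
The boundary term is $\phi(R)e^{RT}x-\phi(-R)e^{-RT}x$, and since $\bigl\Vert\phi(\pm R)e^{\pm RT}x\bigr\Vert\leq C_{x}|\phi(\pm R)|(1+R)^{\alpha}\to 0$ as $R\to\infty$ by the rapid decrease of $\phi$ against the polynomial growth bound $\left\Vert e^{tT}x\right\Vert\leq C_{x}(1+|t|)^{\alpha}$, it vanishes in the limit. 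On the right-hand side, $t\mapsto\phi(t)Te^{tT}x$ is integrable (again by the growth bound, since $T$ is bounded), so letting $R\to\infty$ gives
\begin{equation*}
x_{\phi'}=-\int_{\mathbb{R}}\phi(t)Te^{tT}x\,dt=-T\int_{\mathbb{R}}\phi(t)e^{tT}x\,dt=-Tx_{\phi},
\end{equation*}
where in the last step I pull the bounded operator $T$ out of the Bochner integral (valid for any bounded linear map applied to a Bochner integrable function).

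For the inductive step, suppose $x_{\phi^{(k-1)}}=(-1)^{k-1}T^{k-1}x_{\phi}$ for all $\phi\in S(\mathbb{R})$. Applying the case $k=1$ to the function $\phi^{(k-1)}\in S(\mathbb{R})$ gives $x_{\phi^{(k)}}=x_{(\phi^{(k-1)})'}=-Tx_{\phi^{(k-1)}}$, and then the induction hypothesis yields $x_{\phi^{(k)}}=-T\cdot(-1)^{k-1}T^{k-1}x_{\phi}=(-1)^{k}T^{k}x_{\phi}$, completing the induction. I expect the only point requiring care—though it is entirely routine—is the justification that the boundary terms in the integration by parts vanish, which is precisely where the polynomial bound $\left\Vert e^{tT}x\right\Vert\leq C_{x}(1+|t|)^{\alpha}$ combined with membership of $\phi$ in $S(\mathbb{R})$ is used; everything else is elementary manipulation of vector-valued integrals.
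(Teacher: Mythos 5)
Your proposal is correct and is essentially the paper's own argument: integration by parts over a finite interval, the boundary terms killed by the rapid decay of $\phi$ against the polynomial bound on $\left\Vert e^{tT}x\right\Vert$, followed by induction on $k$. The only cosmetic difference is that you use a symmetric interval $[-R,R]$ while the paper lets the endpoints $a\to-\infty$ and $b\to+\infty$ independently, which is immaterial here since all the integrands are absolutely integrable.
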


\begin{proof}
For an arbitrary $a,b\in 
\mathbb{R}
$ $\left( a<b\right) ,$ we can write 
\begin{equation*}
\int_{a}^{b}\phi ^{\prime }\left( t\right) e^{tT}xdt=\phi \left( b\right)
e^{bT}x-\phi \left( a\right) e^{aT}x-T\int_{a}^{b}\phi \left( t\right)
e^{tT}xdt.
\end{equation*}%
On the other hand, 
\begin{eqnarray*}
\left\Vert \phi \left( b\right) e^{bT}x-\phi \left( a\right)
e^{aT}x\right\Vert &\leq &C_{x}\left\vert \phi \left( b\right) \right\vert
\left( 1+\left\vert b\right\vert \right) ^{\alpha }+C_{x}\left\vert \phi
\left( a\right) \right\vert \left( 1+\left\vert a\right\vert \right)
^{\alpha } \\
&\leq &2^{\alpha }C_{x}\left( \left\vert \phi \left( b\right) \right\vert
+\left\vert \phi \left( b\right) \right\vert \left\vert b\right\vert
^{\alpha }+\left\vert \phi \left( a\right) \right\vert +\left\vert \phi
\left( a\right) \right\vert \left\vert a\right\vert ^{\alpha }\right) .
\end{eqnarray*}%
Since $\phi \in S\left( 
\mathbb{R}
\right) ,$ it follows that 
\begin{equation*}
\underset{a\rightarrow -\infty }{\lim_{b\rightarrow +\infty }}\left\Vert
\phi \left( b\right) e^{bT}x-\phi \left( a\right) e^{aT}x\right\Vert =0.
\end{equation*}%
Hence $x_{\phi ^{\prime }}=-Tx_{\phi }.$ By induction we obtain our result.
\end{proof}

We are now able to prove Theorem 2.1.

\begin{proof}[Proof of Theorem 2.1]
(a) follows from Proposition 2.2. To prove (b), observe that if $\sigma
_{T}\left( x\right) =\left\{ i\lambda \right\} $ $\left( \lambda \in 
\mathbb{R}
\right) ,$ then $\sigma _{T-i\lambda I}\left( x\right) =\left\{ 0\right\} .$
Moreover, the operator $T-i\lambda I$ has SVEP and%
\begin{equation*}
\left\Vert e^{t\left( T-i\lambda I\right) }x\right\Vert =\left\Vert
e^{tT}x\right\Vert \leq C_{x}\left( 1+\left\vert t\right\vert \right)
^{\alpha }.
\end{equation*}%
Therefore, it suffices to show that if $\sigma _{T}\left( x\right) =\left\{
0\right\} ,$ then 
\begin{equation*}
x_{f}=\widehat{f}\left( 0\right) x+\frac{\widehat{f}^{\prime }\left(
0\right) }{1!}\left( iT\right) x+...+\frac{\widehat{f}^{\left( k\right)
}\left( 0\right) }{k!}\left( iT\right) ^{k}x,
\end{equation*}%
for all $f\in L_{\omega }^{1}\left( 
\mathbb{R}
\right) .$ As we have noted above, if $f\in L_{\omega }^{1}\left( 
\mathbb{R}
\right) $ then the first $k$ derivatives of the Fourier transform of $f$
exist at each point of $s\in 
\mathbb{R}
$ and 
\begin{equation*}
J_{\omega }\left( \left\{ s\right\} \right) =\left\{ f\in L_{\omega
}^{1}\left( 
\mathbb{R}
\right) :\widehat{f}\left( s\right) =\widehat{f}^{\prime }\left( s\right)
=...=\widehat{f}^{\left( k\right) }\left( s\right) =0\right\} ,
\end{equation*}%
where $k=\left[ \alpha \right] $. Recall that $J_{\omega }\left( \left\{
s\right\} \right) $ is the smallest closed ideal of $L_{\omega }^{1}\left( 
\mathbb{R}
\right) $ whose hull is $\left\{ s\right\} .$ By Proposition 2.3, hull$%
\left( I_{x}\right) =\left\{ 0\right\} $ and therefore, $J_{\omega }\left(
\left\{ 0\right\} \right) \subseteq I_{x}$. Now, let $\phi \in S\left( 
\mathbb{R}
\right) $ be such that $\widehat{\phi }=1$ in a neighborhood of $0.$ For a
given $f\in L_{\omega }^{1}\left( 
\mathbb{R}
\right) ,$ consider the function 
\begin{equation*}
g:=f-\widehat{f}\left( 0\right) \phi -\frac{\widehat{f}^{\prime }\left(
0\right) }{1!}\left( -i\right) \phi ^{\prime }-...-\frac{\widehat{f}^{\left(
k\right) }\left( 0\right) }{k!}\left( -i\right) ^{k}\phi ^{\left( k\right) }.
\end{equation*}%
As%
\begin{equation*}
\widehat{\phi ^{\left( k\right) }}\left( s\right) =\left( is\right) ^{k}%
\widehat{\phi }\left( s\right) ,
\end{equation*}%
we have%
\begin{equation*}
\widehat{g}\left( s\right) =\widehat{f}\left( s\right) -\left[ \widehat{f}%
\left( 0\right) +\frac{\widehat{f}^{\prime }\left( 0\right) }{1!}s+...+\frac{%
\widehat{f}^{\left( k\right) }\left( 0\right) }{k!}s^{k}\right] \widehat{%
\phi }\left( s\right) .
\end{equation*}%
It can be seen that the first $k$ derivatives of the function $\widehat{g}$
at $0$ are zero and therefore, $g\in J_{\omega }\left( \left\{ 0\right\}
\right) .$ Consequently, $g\in I_{x}$ and so $x_{g}=0.$ On the other hand,
by Lemma 2.7, 
\begin{equation*}
x_{\phi ^{\left( k\right) }}=\left( -1\right) ^{k}T^{k}x
\end{equation*}%
which implies%
\begin{equation*}
x_{g}=x_{f}-\widehat{f}\left( 0\right) x-\frac{\widehat{f}^{\prime }\left(
0\right) }{1!}\left( iT\right) x-...-\frac{\widehat{f}^{\left( k\right)
}\left( 0\right) }{k!}\left( iT\right) ^{k}x.
\end{equation*}%
So we have 
\begin{equation*}
x_{f}=\widehat{f}\left( 0\right) x+\frac{\widehat{f}^{\prime }\left(
0\right) }{1!}\left( iT\right) x+...+\frac{\widehat{f}^{\left( k\right)
}\left( 0\right) }{k!}\left( iT\right) ^{k}x.
\end{equation*}
\end{proof}

Assume that $T\in B\left( X\right) $ has SVEP, $\left( T-\lambda I\right)
^{k}x=0,$ and $\left( T-\lambda I\right) ^{k-1}x\neq 0$ for some $x\in X,$ $%
\lambda \in 
\mathbb{C}
$, and $k\in 
\mathbb{N}
.$ We claim that $\sigma _{T}\left( x\right) =\left\{ \lambda \right\} .$
Indeed, it is easy to check that if 
\begin{equation*}
u\left( z\right) :=\frac{x}{z-\lambda }+\frac{\left( T-\lambda I\right) x}{%
\left( z-\lambda \right) ^{2}}+...+\frac{\left( T-\lambda I\right) ^{k-1}x}{%
\left( z-\lambda \right) ^{k}},
\end{equation*}%
then $\left( zI-T\right) u\left( z\right) =x$ for all $z\in 
\mathbb{C}
\diagdown \left\{ \lambda \right\} .$ This shows that $\sigma _{T}\left(
x\right) \subseteq \left\{ \lambda \right\} .$ Since $T$ has SVEP and $%
\sigma _{T}\left( x\right) \neq \emptyset $, we have $\sigma _{T}\left(
x\right) =\left\{ \lambda \right\} .$

As a consequence of Theorem 2.1, we have the following:

\begin{corollary}
Assume that the hypotheses of Theorem 2.1 are satisfied. If $\sigma
_{T}\left( x\right) =\left\{ i\lambda \right\} $ $\left( \lambda \in 
\mathbb{R}
\right) ,$ then $\left( T-i\lambda I\right) ^{[\alpha ]+1}x=0.$
\end{corollary}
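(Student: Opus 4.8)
The plan is to read off the result from Theorem 2.1(b) and Lemma 2.7, after normalizing the spectral point to the origin. Write $k:=[\alpha]$. If $\sigma_T(x)=\{i\lambda\}$, replace $T$ by $T-i\lambda I$: this operator still has SVEP, it satisfies $\|e^{t(T-i\lambda I)}x\|=\|e^{tT}x\|\le C_x(1+|t|)^{\alpha}$, and $\sigma_{T-i\lambda I}(x)=\{0\}$; since $(T-i\lambda I)^{k+1}x=0$ is precisely the assertion to be proved, it is enough to treat the case $\sigma_T(x)=\{0\}$ and show $T^{k+1}x=0$.

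First I would choose a Schwartz function $\phi\in S(\mathbb{R})$ whose Fourier transform is identically $1$ on a neighbourhood of $0$ (for instance the inverse Fourier transform of a $C_c^{\infty}$ function equal to $1$ near the origin). Because $\omega$ is a polynomial weight, $\phi$ and each of its derivatives belong to $L_{\omega}^{1}(\mathbb{R})$. Since $\widehat{\phi}=1$ near $0=i\sigma_T(x)$, Proposition 2.5(c) gives $x_{\phi}=x$. Next, $\phi^{(k+1)}\in S(\mathbb{R})\subset L_{\omega}^{1}(\mathbb{R})$ and $\widehat{\phi^{(k+1)}}(s)=(is)^{k+1}\widehat{\phi}(s)$, so $\widehat{\phi^{(k+1)}}$ vanishes to order $k+1$ at $0$; in particular its value and its first $k$ derivatives at $0$ are all zero. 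Substituting $f=\phi^{(k+1)}$ into the expansion of Theorem 2.1(b) (in the normalized case $\lambda=0$) therefore gives $x_{\phi^{(k+1)}}=0$. Alternatively one can bypass Theorem 2.1(b): the vanishing of these Taylor coefficients means $\phi^{(k+1)}\in J_{\omega}(\{0\})$, and by Proposition 2.3 one has $\mathrm{hull}(I_x)=i\sigma_T(x)=\{0\}$, so by minimality of $J_{\omega}(\{0\})$ we get $\phi^{(k+1)}\in I_x$, i.e. $x_{\phi^{(k+1)}}=0$.

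Finally, Lemma 2.7 applied with $k+1$ in place of $k$ yields $x_{\phi^{(k+1)}}=(-1)^{k+1}T^{k+1}x_{\phi}=(-1)^{k+1}T^{k+1}x$, and combining this with $x_{\phi^{(k+1)}}=0$ gives $T^{k+1}x=0$. Undoing the normalization, $(T-i\lambda I)^{[\alpha]+1}x=0$. I do not expect a serious obstacle: the argument is essentially bookkeeping with the already established facts, and the only points that deserve a line of justification are the existence of a Schwartz $\phi$ with $\widehat{\phi}\equiv 1$ near $0$ and the elementary observation that multiplying $\widehat{\phi}$ by $(is)^{k+1}$ annihilates its Taylor jet of order $k$ at the origin.
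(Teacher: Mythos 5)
Your proposal is correct and follows essentially the same route as the paper: normalize to $\sigma_T(x)=\{0\}$, take $\phi\in S(\mathbb{R})$ with $\widehat{\phi}=1$ near $0$, observe that the Taylor jet of $\widehat{\phi^{(k+1)}}$ of order $k$ vanishes at $0$ so that Theorem 2.1(b) gives $x_{\phi^{(k+1)}}=0$, and conclude via Lemma 2.7 that $(-1)^{k+1}T^{k+1}x=0$. Your added remarks (the explicit appeal to Proposition 2.5(c) for $x_\phi=x$ and the alternative via $J_\omega(\{0\})\subseteq I_x$) only make explicit steps the paper leaves implicit.
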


\begin{proof}
As in the proof of Theorem 2.1, we may assume that $\sigma _{T}\left(
x\right) =\left\{ 0\right\} .$ Let us show that $T^{k+1}x=0,$ where $k=\left[
\alpha \right] .$ Let $\phi \in S\left( 
\mathbb{R}
\right) $ be such that $\widehat{\phi }=1$ in a neighborhood of $0$ and $%
f:=\phi ^{\left( k+1\right) }.$ Since 
\begin{equation*}
\widehat{f}\left( 0\right) =\widehat{f}^{\prime }\left( 0\right) =...=%
\widehat{f}^{\left( k\right) }\left( 0\right) =0,
\end{equation*}%
by Theorem 2.1,%
\begin{equation*}
0=x_{f}=\left( -1\right) ^{k+1}T^{k+1}x.
\end{equation*}
\end{proof}

\begin{corollary}
Assume that the hypotheses of Theorem 2.1 are satisfied and $0\leq \alpha
<1. $ If $\sigma _{T}\left( x\right) =\left\{ i\lambda _{1},...,i\lambda
_{N}\right\} $, where $\lambda _{1},...,\lambda _{N}$ are pairwice distinct
real numbers$,$ then there exist $x_{1},...,x_{N}$ in $X$ such that $%
x=x_{1}+...+x_{N}$ and $Tx_{k}=i\lambda _{k}x_{k}$ $(k=1,...,N).$
\end{corollary}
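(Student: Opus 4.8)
The plan is to split $x$ into spectral pieces via a partition of unity on the Fourier side, using the machinery already assembled, and then invoke the single-point case (Corollary 2.9) on each piece. Since $0\le\alpha<1$, we have $k=[\alpha]=0$, so every finite subset of $\mathbb{R}$ is a set of synthesis for $L^1_\omega(\mathbb{R})$; this is the crucial structural fact that makes the argument go through cleanly.

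First I would choose, for each $j\in\{1,\dots,N\}$, a function $f_j\in L^1_\omega(\mathbb{R})$ whose Fourier transform $\widehat{f_j}$ equals $1$ in a neighborhood of $\lambda_j$ and vanishes in a neighborhood of each $\lambda_m$ with $m\ne j$ (possible by regularity of $L^1_\omega(\mathbb{R})$ and the fact that the $\lambda_j$ are pairwise distinct). Set $x_j:=x_{f_j}$. By Corollary 2.4(b), $i\sigma_T(x_j)\subseteq\operatorname{supp}\widehat{f_j}$, and by Corollary 2.4(a), $\sigma_T(x_j)\subseteq\sigma_T(x)=\{i\lambda_1,\dots,i\lambda_N\}$; choosing the supports of the $\widehat{f_j}$ to be disjoint small neighborhoods of the respective points forces $\sigma_T(x_j)\subseteq\{i\lambda_j\}$. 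Since $T$ has SVEP, either $x_j=0$ or $\sigma_T(x_j)=\{i\lambda_j\}$, and in the latter case Corollary 2.9 (with $\alpha<1$, so $[\alpha]+1=1$) gives $(T-i\lambda_j I)x_j=0$, i.e. $Tx_j=i\lambda_j x_j$; this also holds trivially when $x_j=0$.

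Next I would show $x=x_1+\cdots+x_N$. Consider $g:=f_1+\cdots+f_N$; then $\widehat{g}=1$ in a neighborhood of $\sigma_T(x)$'s preimage $\{\lambda_1,\dots,\lambda_N\}$ in $\mathbb{R}$, hence $\widehat{g}=1$ in a neighborhood of $i\sigma_T(x)$ (after the standard identification). By Proposition 2.5(c), $x_g=x$, and since $\mu\mapsto x_\mu$ is linear, $x_g=x_1+\cdots+x_N$. Therefore $x=x_1+\cdots+x_N$ with $Tx_j=i\lambda_j x_j$, as required.

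I do not anticipate a serious obstacle here: the only point requiring care is the simultaneous construction of the $f_j$ with the correct separation of supports — that $\widehat{f_j}=1$ near $\lambda_j$ while supp $\widehat{f_j}$ avoids all other $\lambda_m$ — which is exactly where regularity of the Beurling algebra is used, together with the hypothesis that the $\lambda_j$ are pairwise distinct. Everything else is a direct bookkeeping of the corollaries already proved; the restriction $0\le\alpha<1$ enters only to guarantee $[\alpha]+1=1$ so that Corollary 2.9 yields the eigenvector relation rather than a higher-order nilpotency.
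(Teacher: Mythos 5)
Your argument is correct and follows essentially the same route as the paper: a Fourier-side partition of unity producing $x_j=x_{f_j}$, localization of $\sigma_T(x_j)$ via Corollary 2.4, and the single-point case (Corollary 2.8 --- which you mislabel as 2.9) to obtain $Tx_j=i\lambda_j x_j$; the paper merely imposes the pointwise conditions $\widehat{f_m}(-\lambda_n)=\delta_{mn}$ and invokes the synthesis property through Corollary 2.6, where you use neighborhood conditions on $\widehat{f_j}$ together with Propositions 2.5. The only caveat is a sign: in the paper's conventions $i\sigma_T(x)=\{-\lambda_1,\dots,-\lambda_N\}$, so your $\widehat{f_j}$ should equal $1$ near $-\lambda_j$ rather than $\lambda_j$ --- a harmless relabeling that your own ``standard identification'' remark already anticipates.
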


\begin{proof}
Let $f\in L_{\omega }^{1}\left( 
\mathbb{R}
\right) $ be such that 
\begin{equation*}
\widehat{f}\left( -\lambda _{1}\right) =...=\widehat{f}\left( -\lambda
_{N}\right) =1.
\end{equation*}%
Since $\left\{ -\lambda _{1},...,-\lambda _{N}\right\} $ is a set of
synthesis for $L_{\omega }^{1}\left( 
\mathbb{R}
\right) ,$ by Corollary 2.6, $x_{f}=x.$ Now, assume that the functions $%
f_{1},...,f_{N}$ in $L_{\omega }^{1}\left( 
\mathbb{R}
\right) $ satisfy the conditions 
\begin{equation*}
\widehat{f_{m}}\left( -\lambda _{n}\right) =\delta _{mn}\text{ \ }\left(
m,n=1,...,N\right) ,
\end{equation*}%
where $\delta _{mn}$ is the Kronecker symbol. We put $x_{k}=x_{f_{k}}.$
Since the Fourier transform of the function $f-f_{1}-...-f_{N}$ vanishes on $%
\left\{ -\lambda _{1},...,-\lambda _{N}\right\} ,$ by Corollary 2.6,%
\begin{equation*}
x=x_{f}=x_{1}+...+x_{N}.
\end{equation*}%
On the other hand, by Corollary 2.4, $\sigma _{T}\left( x_{k}\right)
\subseteq \left\{ i\lambda _{k}\right\} .$ Since $T$ has SVEP, $\sigma
_{T}\left( x_{k}\right) =\left\{ i\lambda _{k}\right\} .$ By Corollary 2.8, $%
Tx_{k}=i\lambda _{k}x_{k}$ $(k=1,...,N).$
\end{proof}

\begin{example}
The condition $\left\Vert e^{tT}x\right\Vert \leq C_{x}\left( 1+t\right)
^{\alpha }$ for all $t\geq 0$ is not sufficient in Corollary 2.8. To see
this, let $S$ be a contraction on a Hilbert space $H$ such that $\sigma
\left( S\right) =\left\{ 1\right\} $ and $S\neq I$ $($if $S$ is a $C_{0}-$%
contraction on $H$ with minimal function $\exp \frac{z+1}{z-1}$, then $%
\sigma \left( S\right) =\left\{ 1\right\} $ and $S\neq I$ \cite[Ch.III]{16}$%
) $. If $T:=S-I,$ then $T\neq 0,$ $\sigma \left( T\right) =0,$ and $%
\left\Vert e^{tT}x\right\Vert \leq \left\Vert x\right\Vert $ for all $x\in H$
and $t\geq 0$. Let $x_{0}\in H$ be such that $Tx_{0}\neq 0.$ Since $%
x_{0}\neq 0$ and $T$ has SVEP, $\sigma _{T}\left( x_{0}\right) \neq
\emptyset .$ On the other hand, as $\sigma _{T}\left( x_{0}\right) \subseteq
\sigma \left( T\right) =\left\{ 0\right\} ,$ we have $\sigma _{T}\left(
x_{0}\right) =\left\{ 0\right\} .$
\end{example}

Assume that the condition (1.1) is satisfied for all $x\in X.$ Then, by the
uniform boundedness principle there exists a constant $C_{T}>0$ such that 
\begin{equation}
\left\Vert e^{tT}\right\Vert \leq C_{T}\left( 1+\left\vert t\right\vert
\right) ^{\alpha },\text{ \ }\forall t\in 
\mathbb{R}
.  \label{2.5}
\end{equation}%
Let $T$ be the backward shift operator on the Hardy space $H^{2}$ defined by 
\begin{equation*}
\left( Tf\right) \left( z\right) =\frac{f\left( z\right) -f\left( 0\right) }{%
z}.
\end{equation*}%
From this it is readily verified that 
\begin{equation*}
\left\Vert e^{tT}z^{k}\right\Vert _{2}\leq C_{k}\left( 1+\left\vert
t\right\vert \right) ^{k},\text{ \ }\forall k\in 
\mathbb{N}
\cup \left\{ 0\right\} .
\end{equation*}%
It follows that the operator $T$ does not satisfy the condition (2.5).

We recall that according to \cite[Definition 1.1.1]{12}, an operator $T\in
B\left( X\right) $ is \textit{decomposable} if for every cover of the
complex plane by a pair $\left\{ U,W\right\} $ of open sets, there exist a
pair of closed $T-$invariant subspaces $E$ and $F$ of $X$ such that $E+F=X$, 
$\sigma \left( T\mid _{E}\right) \subset U$, and $\sigma \left( T\mid
_{F}\right) \subset W$.

Every decomposable operator $T$ on a Banach space $X$ has SVEP \cite[Ch.1]%
{12}. Moreover, $\lim_{n\rightarrow \infty }\left\Vert T^{n}x\right\Vert ^{%
\frac{1}{n}}$ exists for all $x\in X$ \cite[Proposition 3.3.17]{12}.

For an arbitrary $k\in \left\{ 0,1,...,\infty \right\} ,$ let $C^{k}\left(
\Omega \right) $ be the algebra of all $k-$times continuously differentiable
complex-valued functions defined on an open subset $\Omega $ of the complex
plane. An operator $A\in B\left( X\right) $ is said to be $C^{k}\left(
\Omega \right) -$\textit{scalar operator} if there exists a continuous
algebra homomorphism\textit{\ }$\Phi :C^{k}\left( \Omega \right) \rightarrow
B\left( X\right) $ for which $\Phi \left( \mathbf{1}\right) =I$ and $\Phi
\left( \mathbf{z}\right) =A$ \cite{5,12}. In the case $\Omega =%
\mathbb{C}
$ and $k=\infty ,$ the operator $A$ is said to be \textit{generalized scalar 
}(in this case, $\Phi $ is continuous with respect to the natural Fr\'{e}%
chet algebra topology on $C^{\infty }\left( 
\mathbb{C}
\right) )$. It is known \cite[Theorem 1.4.10]{12} that every $C^{k}\left(
\Omega \right) -$scalar operator is decomposable.

Every generalized scalar operator $A$ has a decomposition $A=T+iS$ with $T,S$
commuting generalized scalar operators with real spectra \cite[Lemma 4.6.1]%
{5}. Recall also \cite[Theorem 1.5.19]{12} that an operator $T$ with real
spectrum is generalized scalar if and only if there exist constants $C_{T}>0$
and $\alpha \geq 0$ such that 
\begin{equation}
\left\Vert e^{itT}\right\Vert \leq C_{T}\left( 1+\left\vert t\right\vert
^{\alpha }\right) ,\text{ }\forall t\in 
\mathbb{R}
.  \label{2.6}
\end{equation}

An operator $T\in B\left( X\right) $ is said to be \textit{Hermitian }if $%
\left\Vert e^{itT}\right\Vert =1$ for all $t\in 
\mathbb{R}
$. A Hermitian operator is $C^{1}\left( 
\mathbb{R}
\right) -$scalar and therefore it is decomposable \cite{2}. A basis result
for Hermitian operators is that $\left\Vert T\right\Vert =r\left( T\right) $ 
\cite[p.57]{3}.

We have the following:

\begin{theorem}
Assume that $T\in B\left( X\right) $ has SVEP and $x\in X$ satisfies the
condition 
\begin{equation*}
\left\Vert e^{tT}x\right\Vert \leq C_{x}\left( 1+\left\vert t\right\vert
^{\alpha }\right) \text{ }\left( 0\leq \alpha <1\right) ,
\end{equation*}
for all $t\in 
\mathbb{R}
$ and for some constant $C_{x}>0.$ Then we have 
\begin{equation*}
\left\Vert Tx\right\Vert \leq C_{x}\left[ r_{T}\left( x\right) +C\left(
\alpha \right) r_{T}\left( x\right) ^{1-\alpha }\right] ,
\end{equation*}%
where%
\begin{equation}
C\left( \alpha \right) =\left( \frac{2}{\pi }\right) ^{2-\alpha
}\sum\limits_{k\in 
\mathbb{Z}
}\frac{1}{\left\vert 2k+1\right\vert ^{2-\alpha }}\text{ \ }\left( C\left(
0\right) =1\right) .  \label{2.7}
\end{equation}
\end{theorem}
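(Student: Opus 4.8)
The strategy is to recast the estimate as a problem about test functions via Propositions 2.2--2.5 and Lemma 2.7, and then to read off $C(\alpha )$ from an explicit extremal configuration on the Fourier side.

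Put $r=r_{T}(x)$. By Proposition 2.2, $\sigma _{T}(x)\subset i\mathbb{R}$, so $i\sigma _{T}(x)$ is a compact subset of $[-r,r]$. If $x=0$ there is nothing to prove; if $x\neq 0$ and $r=0$, then $\sigma _{T}(x)=\{0\}$ and Corollary 2.8 (with $[\alpha ]=0$) gives $Tx=0$; so assume $r>0$. Fix any $\phi \in S(\mathbb{R})$ with $\widehat{\phi }\equiv 1$ on an open neighbourhood of $[-r,r]$. Since $[-r,r]\supseteq i\sigma _{T}(x)$ and since $\left\Vert e^{tT}x\right\Vert \leq C_{x}(1+|t|^{\alpha })\leq 2C_{x}(1+|t|)^{\alpha }$, Proposition 2.5(c) gives $x_{\phi }=x$, and Lemma 2.7 gives $x_{\phi ^{\prime }}=-Tx_{\phi }=-Tx$; hence
\[
\left\Vert Tx\right\Vert =\left\Vert x_{\phi ^{\prime }}\right\Vert \leq \int_{\mathbb{R}}\left\vert \phi ^{\prime }(t)\right\vert \left\Vert e^{tT}x\right\Vert dt\leq C_{x}\int_{\mathbb{R}}\left\vert \phi ^{\prime }(t)\right\vert \left( 1+\left\vert t\right\vert ^{\alpha }\right) dt.
\]
Thus it suffices to construct, for every $\varepsilon >0$, such a $\phi $ for which the last integral is $\leq r+C(\alpha )r^{1-\alpha }+\varepsilon $.

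Rescale by $\phi (t)=r\psi (rt)$, so that $\widehat{\psi }\equiv 1$ on a neighbourhood of $[-1,1]$; the substitution $u=rt$ gives
\[
\int_{\mathbb{R}}\left\vert \phi ^{\prime }(t)\right\vert (1+\left\vert t\right\vert ^{\alpha })\,dt=r\left\Vert \psi ^{\prime }\right\Vert _{1}+r^{1-\alpha }\int_{\mathbb{R}}\left\vert \psi ^{\prime }(u)\right\vert \left\vert u\right\vert ^{\alpha }\,du.
\]
On the Fourier side $\widehat{\psi ^{\prime }}(s)=is$ on $[-1,1]$. Now $\{e^{-i(2k+1)\pi s/2}\}_{k\in \mathbb{Z}}$ is an orthogonal basis of $L^{2}(-1,1)$, and the expansion of $s\mapsto is$ in it has coefficients $m_{k}=-\frac{4(-1)^{k}}{\pi ^{2}(2k+1)^{2}}$; equivalently $\widehat{\psi ^{\prime }}$ should be close to $i\,\mathrm{tri}$, the $4$-periodic triangle wave equal to $s$ on $[-1,1]$, and $\psi ^{\prime }$ close to the discrete measure $\sum_{k\in \mathbb{Z}}m_{k}\delta _{(2k+1)\pi /2}$. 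For that configuration
\[
\left\Vert \psi ^{\prime }\right\Vert _{1}\to \sum_{k}\left\vert m_{k}\right\vert =\frac{4}{\pi ^{2}}\sum_{k\in \mathbb{Z}}\frac{1}{(2k+1)^{2}}=1,\qquad \int_{\mathbb{R}}\left\vert \psi ^{\prime }(u)\right\vert \left\vert u\right\vert ^{\alpha }\,du\to \sum_{k}\left\vert m_{k}\right\vert \left\vert \tfrac{(2k+1)\pi }{2}\right\vert ^{\alpha }=C(\alpha ),
\]
the first sum by $\sum_{k\in \mathbb{Z}}(2k+1)^{-2}=\pi ^{2}/4$, the second by $\frac{4}{\pi ^{2}}\bigl(\frac{\pi }{2}\bigr)^{\alpha }\sum_{k}\left\vert 2k+1\right\vert ^{\alpha -2}=(2/\pi )^{2-\alpha }\sum_{k}\left\vert 2k+1\right\vert ^{-(2-\alpha )}$; this is precisely why the series in $C(\alpha )$ runs over the odd integers and converges exactly for $0\leq \alpha <1$. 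Hence the bracket tends to $r+C(\alpha )r^{1-\alpha }$.

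The real obstacle is to legitimise this discrete heuristic: the measure $\sum m_{k}\delta _{(2k+1)\pi /2}$ is not admissible, since one needs $\psi _{n}\in S(\mathbb{R})$ (or at least $\psi _{n},\psi _{n}^{\prime }\in L_{\omega }^{1}(\mathbb{R})$ with $\psi _{n}(t)(1+|t|)^{\alpha }\to 0$, so that Proposition 2.5(c) and the integration by parts underlying Lemma 2.7 apply), and \emph{crucially} $\widehat{\psi _{n}}\equiv 1$ on a genuine neighbourhood of $[-1,1]$, not merely $\widehat{\psi _{n}^{\prime }}(s)=is$ at the isolated support points of the heuristic measure. The device I would use is a two-scale regularization of the profile $\widehat{\psi }(s)=\mathrm{tri}(s)/s$, which is continuous, equals $1$ on $[-1,1]$, has corners only at the odd integers, and gives $\widehat{\psi ^{\prime }}=i\,\mathrm{tri}$: flatten it to $1$ on $[-1-\varepsilon _{n},1+\varepsilon _{n}]$, round the remaining corners on a scale $\delta _{n}\to 0$, and cut it off to $\{|s|\leq R_{n}\}$ with $R_{n}\to \infty $. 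Each such modification alters $\widehat{\psi _{n}^{\prime }}$ by a smooth function of small amplitude (of size $O(\varepsilon _{n})$, $O(\delta _{n})$ and $O(1/R_{n})$ respectively), whose inverse Fourier transform contributes an amount tending to $0$ in $L^{1}\bigl((1+|u|^{\alpha })\,du\bigr)$ --- here $\alpha <1$ is used again, to keep the weighted tails finite. Verifying that no $L^{1}$-mass is gained in the limit, and then transplanting back by $\phi _{n}(t)=r\psi _{n}(rt)$ and letting $n\to \infty $, yields $\left\Vert Tx\right\Vert \leq C_{x}\bigl(r+C(\alpha )r^{1-\alpha }\bigr)$.
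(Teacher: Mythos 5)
Your identification of the extremal object is exactly the paper's: the $4a$-periodic triangle wave equal to $\lambda$ on $[-a,a]$, whose Fourier coefficients are supported on the odd integers with weights $\frac{4a}{i\pi^{2}}\frac{(-1)^{k}}{(2k+1)^{2}}$, and your computation of $\|\mu\|_{1,\omega}=a+C(\alpha)a^{1-\alpha}$ for $\omega(t)=1+|t|^{\alpha}$ is the one that produces $(2.7)$. But there is a genuine gap where you yourself locate "the real obstacle": your framework forces you to approximate the discrete measure $\sum_{k}m_{k}\delta_{(2k+1)\pi/2}$ by Schwartz functions $\psi_{n}$ with $\widehat{\psi_{n}}\equiv 1$ on a neighbourhood of $[-1,1]$, and the assertion that the three modifications (flattening, corner-rounding at scale $\delta_{n}$, truncation at $R_{n}$) each contribute $o(1)$ to $\int_{\mathbb{R}}|\psi_{n}^{\prime}(u)|(1+|u|^{\alpha})\,du$ is precisely the hard analytic content and is nowhere proved. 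Smallness of a perturbation of $\widehat{\psi_{n}^{\prime}}$ in amplitude does not by itself control the weighted $L^{1}$-norm of its inverse Fourier transform (rounding a corner at scale $\delta_{n}$ makes second derivatives of size $1/\delta_{n}$, and there are infinitely many corners before truncation, so the three scales interact); one can push such estimates through for $\alpha<1$, but as written the argument is a program, not a proof, and the sentence "verifying that no $L^{1}$-mass is gained in the limit" concedes the missing step.

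The idea you are missing is that no regularization is needed at all: the paper's module action $(\mu,x)\mapsto x_{\mu}=\int_{\mathbb{R}}e^{tT}x\,d\mu(t)$ is defined for every $\mu\in M_{\omega}(\mathbb{R})$, with $\|x_{\mu}\|\leq C_{x}\|\mu\|_{1,\omega}$, and Lemma 2.12 is stated for measures: if $\widehat{\mu}(\lambda)=\lambda$ in a neighbourhood of $i\sigma_{T}(x)$, then $x_{\mu}=iTx$ (its proof is exactly your $\phi$/$\phi^{\prime}$ reduction, but applied once and for all to transfer the identity from a single Schwartz function $g$ with $\widehat{g}=1$ near $i\sigma_{T}(x)$ to the measure via $x_{\mu\ast g}=(x_{g})_{\mu}$). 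The discrete measure concentrated at $\lambda_{k}=-\frac{1}{a}(2k+1)\frac{\pi}{2}$ is therefore admissible as it stands, since $\sum_{k}|c_{k}|\,\omega(\lambda_{k})<\infty$ for $\alpha<1$, and one gets $\|Tx\|=\|x_{\mu}\|\leq C_{x}[a+C(\alpha)a^{1-\alpha}]$ immediately, then lets $a\downarrow r_{T}(x)$. Rewriting your argument through Lemma 2.12 would close the gap and shorten the proof considerably.
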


The following lemma is needed for the proof of the theorem.

\begin{lemma}
Let $\mu \in M_{\omega }\left( 
\mathbb{R}
\right) ,$ where $\omega \left( t\right) =\left( 1+\left\vert t\right\vert
\right) ^{\alpha }$ $\left( \alpha \geq 0\right) .$ Assume that $T\in
B\left( X\right) $ and $x\in X$ satisfy the following conditions:

$\left( i\right) $ $T$ has SVEP;

$\left( ii\right) $ $\left\Vert e^{tT}x\right\Vert \leq C_{x}\omega \left(
t\right) $ for all $t\in 
\mathbb{R}
$ and for some constant $C_{x}>0.$

If $\widehat{\mu }\left( \lambda \right) =\lambda $ in a neighborhood of $%
i\sigma _{T}\left( x\right) ,$ then 
\begin{equation*}
x_{\mu }=iTx.
\end{equation*}
\end{lemma}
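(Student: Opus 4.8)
The plan is to produce one explicit function that realises $iTx$ and whose Fourier transform agrees with $\lambda$ near $i\sigma_T(x)$, and then to argue that the value of $x_\mu$ depends only on the germ of $\widehat{\mu}$ at $i\sigma_T(x)$. We may assume $x\neq 0$, since otherwise $e^{tT}x\equiv 0$ and the asserted identity is trivial; then, since $T$ has SVEP, $\sigma_T(x)$ is a non-empty compact subset of $i\mathbb{R}$, so $i\sigma_T(x)$ is a non-empty compact subset of $\mathbb{R}$.

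\textit{Step 1 (an explicit model for $iTx$).} As in the proof of Theorem 2.1 one can pick $\phi\in S(\mathbb{R})$ with $\widehat{\phi}=1$ on some open neighbourhood $V$ of $i\sigma_T(x)$ (for instance, take $\chi\in C_c^{\infty}(\mathbb{R})$ equal to $1$ near $i\sigma_T(x)$ and let $\phi$ be its inverse Fourier transform), and put $\psi:=-i\phi'\in S(\mathbb{R})\subset L_{\omega}^{1}(\mathbb{R})$. Since $\widehat{\phi'}(\lambda)=i\lambda\,\widehat{\phi}(\lambda)$, we get $\widehat{\psi}(\lambda)=\lambda\,\widehat{\phi}(\lambda)$, which equals $\lambda$ on $V$. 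Moreover Lemma 2.7 gives $x_{\phi'}=-Tx_{\phi}$, so $x_{\psi}=-ix_{\phi'}=iTx_{\phi}$, while Proposition 2.5(c) (applicable because $\widehat{\phi}=1$ near $i\sigma_T(x)$) gives $x_{\phi}=x$. Hence $x_{\psi}=iTx$.

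\textit{Step 2 (localisation).} Set $\rho:=\mu-\psi\in M_{\omega}(\mathbb{R})$. By the hypothesis $\widehat{\mu}(\lambda)=\lambda$ near $i\sigma_T(x)$ and by Step 1, the Fourier--Stieltjes transform $\widehat{\rho}=\widehat{\mu}-\widehat{\psi}$ vanishes on an open neighbourhood of $i\sigma_T(x)$. Since $\mu\mapsto x_{\mu}$ is linear it suffices to prove $x_{\rho}=0$. Let $\{e_n\}$ be the bounded approximate identity introduced before Proposition 2.5. Because $L_{\omega}^{1}(\mathbb{R})$ is an ideal of $M_{\omega}(\mathbb{R})$, each $\rho\ast e_n$ lies in $L_{\omega}^{1}(\mathbb{R})$, and $\widehat{\rho\ast e_n}=\widehat{\rho}\,\widehat{e_n}$ still vanishes on a neighbourhood of $i\sigma_T(x)$; Proposition 2.5(b) therefore yields $x_{\rho\ast e_n}=0$ for every $n$. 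On the other hand $x_{\rho}\in E_T^{\omega}$ (since $E_T^{\omega}$ is an $M_{\omega}(\mathbb{R})$-module), and from the module identity $(x_{\rho})_{e_n}=x_{\rho\ast e_n}$ together with $(x_{\rho})_{e_n}-x_{\rho}=\int_{-1/n}^{1/n}e_n(t)\big(e^{tT}x_{\rho}-x_{\rho}\big)\,dt\to 0$ we obtain $x_{\rho}=\lim_{n}x_{\rho\ast e_n}=0$. Consequently $x_{\mu}=x_{\psi}=iTx$.

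The only delicate point is the passage from functions to measures: Proposition 2.5(b) is stated only for $f\in L_{\omega}^{1}(\mathbb{R})$, and it is the approximate-identity regularisation $\rho\ast e_n$, combined with the identity $(x_{\rho})_{e_n}=x_{\rho\ast e_n}$ and the convergence $(x_{\rho})_{e_n}\to x_{\rho}$, that makes it applicable to the measure $\rho$. Everything else is a routine Fourier-transform computation, so I do not expect any real obstacle beyond bookkeeping of neighbourhoods.
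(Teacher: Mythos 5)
Your proof is correct and follows essentially the same route as the paper's: both pick a Schwartz function $\phi $ with $\widehat{\phi }=1$ near $i\sigma _{T}\left( x\right) $, identify $iTx$ as $x_{-i\phi ^{\prime }}$ via Lemma 2.7 and Proposition 2.5, and then annihilate the discrepancy with $\mu $ by Proposition 2.5(b). The only (harmless) divergence is in the last step: the paper convolves $\mu $ with $\phi $ so that $-i\phi ^{\prime }-\mu \ast \phi $ already lies in $L_{\omega }^{1}\left( \mathbb{R}\right) $ and Proposition 2.5(b) applies at once, whereas you regularize the measure $\mu +i\phi ^{\prime }$ with the bounded approximate identity $\left\{ e_{n}\right\} $ and pass to the limit --- both devices are valid.
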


\begin{proof}
Let $g\in S\left( 
\mathbb{R}
\right) $ be such that $\widehat{g}=1$ in a neighborhood of $i\sigma
_{T}\left( x\right) .$ By Proposition 2.5, $x_{g}=x.$ On the other hand, by
Lemma 2.7, 
\begin{equation*}
x_{g^{\prime }}=-Tx.
\end{equation*}%
Since%
\begin{equation*}
\widehat{g^{\prime }}\left( \lambda \right) =i\lambda \widehat{g}\left(
\lambda \right) ,
\end{equation*}%
the Fourier transform of the function $-ig^{\prime }-\mu \ast g$ vanishes in
a neighborhood of $i\sigma _{T}\left( x\right) .$ By Proposition 2.5, 
\begin{equation*}
-ix_{g^{\prime }}=x_{\mu \ast g}=\left( x_{g}\right) _{\mu }=x_{\mu }.
\end{equation*}%
Hence $x_{\mu }=iTx.$
\end{proof}

Note that in the preceding lemma the weight function $\omega \left( t\right)
=\left( 1+\left\vert t\right\vert \right) ^{\alpha }$ $\left( \alpha \geq
0\right) $ can be replaced by the weight $\omega \left( t\right)
=1+\left\vert t\right\vert ^{\alpha }$ $\left( 0\leq \alpha <1\right) .$

\begin{proof}[Proof of Theorem 2.11]
We basically follow the proof in \cite[Lemma 3.4]{14}. Let $a>0$ be such
that $i\sigma _{T}\left( x\right) \subset \left( -a,a\right) .$ Consider the
function $f$ on $%
\mathbb{R}
$ defined by $f\left( \lambda \right) =\lambda $ for $-a\leq \lambda \leq a$
and $f\left( \lambda \right) =2a-\lambda $ for $a\leq \lambda \leq 3a.$ We
extend this function periodically to the real line by putting $f\left(
\lambda +4a\right) =f\left( \lambda \right) $ $\left( \lambda \in 
\mathbb{R}
\right) .$ The Fourier coefficients of $f$ are given by the equalities%
\begin{equation*}
c_{n}\left( f\right) =\frac{1}{4a}\int_{-a}^{3a}\exp \left( -i\frac{n\pi }{2a%
}\lambda \right) f\left( \lambda \right) d\lambda .
\end{equation*}%
A few lines of computation show that 
\begin{equation*}
c_{2k}\left( f\right) =0\text{ \ and \ }c_{2k+1}\left( f\right) =\frac{1}{i}%
\frac{4a}{\pi ^{2}}\left( -1\right) ^{k}\frac{1}{\left( 2k+1\right) ^{2}}%
\text{ \ }\left( k\in 
\mathbb{Z}
\right) .
\end{equation*}%
Let $\mu $ be a discrete measure on $%
\mathbb{R}
$ concentrated at the points 
\begin{equation*}
\lambda _{k}:=-\frac{1}{a}\left( 2k+1\right) \frac{\pi }{2}\text{ }
\end{equation*}%
with the corresponding weights 
\begin{equation*}
c_{k}:=\frac{1}{i}\frac{4a}{\pi ^{2}}\left( -1\right) ^{k}\frac{1}{\left(
2k+1\right) ^{2}}\text{ \ }\left( k\in 
\mathbb{Z}
\right) .
\end{equation*}%
Since 
\begin{equation*}
\sum\limits_{k\in 
\mathbb{Z}
}\left\vert c_{k}\right\vert <\infty ,
\end{equation*}%
by the uniqueness theorem,%
\begin{equation*}
\widehat{\mu }\left( \lambda \right) =f\left( \lambda \right) =\frac{1}{i}%
\frac{4a}{\pi ^{2}}\sum\limits_{k\in 
\mathbb{Z}
}\left( -1\right) ^{k}\frac{1}{\left( 2k+1\right) ^{2}}\exp \left[ i\frac{1}{%
a}\left( 2k+1\right) \frac{\pi }{2}\lambda \right] .
\end{equation*}%
Now if $\omega \left( t\right) :=1+\left\vert t\right\vert ^{\alpha }$ $%
\left( 0\leq \alpha <1\right) ,$ then as 
\begin{equation*}
\sum\limits_{k\in 
\mathbb{Z}
}\frac{1}{\left( 2k+1\right) ^{2}}=\frac{\pi ^{2}}{4},
\end{equation*}%
we can write

\begin{eqnarray*}
\left\Vert \mu \right\Vert _{1,\omega } &=&\int\limits_{%
\mathbb{R}
}\left( 1+\left\vert t\right\vert ^{\alpha }\right) d\left\vert \mu
\right\vert \left( t\right) \\
&=&\sum\limits_{k\in 
\mathbb{Z}
}\left\vert c_{k}\right\vert \left( 1+\left\vert \lambda _{k}\right\vert
^{\alpha }\right) \\
&=&\frac{4a}{\pi ^{2}}\sum\limits_{k\in 
\mathbb{Z}
}\frac{[1+\frac{1}{a^{\alpha }}\left\vert 2k+1\right\vert ^{\alpha }\left( 
\frac{\pi }{2}\right) ^{\alpha }]}{\left( 2k+1\right) ^{2}} \\
&=&a+C\left( \alpha \right) a^{1-\alpha },
\end{eqnarray*}%
where 
\begin{equation*}
C\left( \alpha \right) =\left( \frac{2}{\pi }\right) ^{2-\alpha
}\sum\limits_{k\in 
\mathbb{Z}
}\frac{1}{\left\vert 2k+1\right\vert ^{2-\alpha }}.
\end{equation*}%
Since $\widehat{\mu }\left( \lambda \right) =\lambda $ in a neighborhood of $%
i\sigma _{T}\left( x\right) ,$ by Lemma 2.12, $x_{\mu }=iTx$. Consequently,
we get 
\begin{equation*}
\left\Vert Tx\right\Vert =\left\Vert x_{\mu }\right\Vert \leq
C_{x}\left\Vert \mu \right\Vert _{1,\omega }=C_{x}\left[ a+C\left( \alpha
\right) a^{1-\alpha }\right] .
\end{equation*}%
Since $a$ is an arbitrary positive number greater than $r_{T}\left( x\right) 
$, we have 
\begin{equation*}
\left\Vert Tx\right\Vert \leq C_{x}\left[ r_{T}\left( x\right) +C\left(
\alpha \right) r_{T}\left( x\right) ^{1-\alpha }\right] .
\end{equation*}
\end{proof}

An obvious corollary of Theorem 2.11 is the next result.

\begin{corollary}
If the operator $T\in B\left( X\right) $ satisfies the condition $\left(
2.6\right) $ with $0\leq \alpha <1,$ then for every $x\in X,$ 
\begin{equation*}
\left\Vert Tx\right\Vert \leq C_{T}\left[ r_{T}\left( x\right) +C\left(
\alpha \right) r_{T}\left( x\right) ^{1-\alpha }\right] \left\Vert
x\right\Vert .
\end{equation*}%
In particular, we have%
\begin{equation*}
\left\Vert T\right\Vert \leq C_{T}\left[ r\left( T\right) +C\left( \alpha
\right) r\left( T\right) ^{1-\alpha }\right] .
\end{equation*}%
where $C\left( \alpha \right) $ is defined by $\left( 2.7\right) .$
\end{corollary}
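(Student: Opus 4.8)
The plan is to obtain the corollary from Theorem 2.11 applied to the operator $S:=iT$. Since $e^{tS}=e^{itT}$, condition $(2.6)$ says exactly that $\left\Vert e^{tS}\right\Vert \leq C_{T}\left( 1+\left\vert t\right\vert ^{\alpha }\right) $ for all $t\in \mathbb{R}$. Before invoking Theorem 2.11 I must check that $S$ has SVEP. For this it is enough to repeat, in the operator norm, the computation preceding Proposition 2.2: the $B\left( X\right) $-valued integrals $\int_{0}^{\infty }e^{-zt}e^{tS}\,dt$ (for $\operatorname{Re}z>0$) and $-\int_{-\infty }^{0}e^{-zt}e^{tS}\,dt$ (for $\operatorname{Re}z<0$) converge absolutely, because $e^{-at}\left( 1+\left\vert t\right\vert ^{\alpha }\right) $ is integrable when $a>0$, and each is a two-sided inverse of $zI-S$ on the corresponding open half-plane; hence $\sigma \left( S\right) \subset i\mathbb{R}$. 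A spectrum without interior points forces SVEP, as noted in Section 2, so $S$ has SVEP (and incidentally $\sigma \left( T\right) \subset \mathbb{R}$).

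Now fix $x\in X$. Then $\left\Vert e^{tS}x\right\Vert =\left\Vert e^{itT}x\right\Vert \leq C_{T}\left\Vert x\right\Vert \left( 1+\left\vert t\right\vert ^{\alpha }\right) $ for all $t\in \mathbb{R}$, so $S$ and $x$ satisfy the hypotheses of Theorem 2.11 with $C_{x}:=C_{T}\left\Vert x\right\Vert $. Theorem 2.11 yields
\begin{equation*}
\left\Vert Sx\right\Vert \leq C_{T}\left\Vert x\right\Vert \left[ r_{S}\left( x\right) +C\left( \alpha \right) r_{S}\left( x\right) ^{1-\alpha }\right] .
\end{equation*}
Since $S$ has SVEP and $\left\Vert S^{n}x\right\Vert =\left\Vert T^{n}x\right\Vert $ for every $n$, we have
\begin{equation*}
r_{S}\left( x\right) =\underset{n\rightarrow \infty }{\overline{\lim }}\left\Vert S^{n}x\right\Vert ^{\frac{1}{n}}=\underset{n\rightarrow \infty }{\overline{\lim }}\left\Vert T^{n}x\right\Vert ^{\frac{1}{n}}=r_{T}\left( x\right) ,
\end{equation*}
and since $\left\Vert Sx\right\Vert =\left\Vert Tx\right\Vert $, the first inequality of the corollary follows.

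For the \textquotedblleft in particular\textquotedblright\ assertion I would pass to the supremum over $\left\Vert x\right\Vert \leq 1$. Since $\sigma _{T}\left( x\right) \subset \sigma \left( T\right) $ we have $r_{T}\left( x\right) \leq r\left( T\right) $ for every $x$, and the function $t\mapsto t+C\left( \alpha \right) t^{1-\alpha }$ is nondecreasing on $\left[ 0,\infty \right) $ (here $C\left( \alpha \right) >0$ and $0\leq \alpha <1$). Hence $r_{T}\left( x\right) +C\left( \alpha \right) r_{T}\left( x\right) ^{1-\alpha }\leq r\left( T\right) +C\left( \alpha \right) r\left( T\right) ^{1-\alpha }$ for all $x$, and taking the supremum gives $\left\Vert T\right\Vert \leq C_{T}\left[ r\left( T\right) +C\left( \alpha \right) r\left( T\right) ^{1-\alpha }\right] $.

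There is no genuine obstacle here: the result is routine once one makes the substitution $S=iT$. The only points requiring a little care are the bookkeeping with the factor $i$ in the exponent — so that $(2.6)$ lines up with the hypothesis $\left\Vert e^{tS}x\right\Vert \leq C_{x}\left( 1+\left\vert t\right\vert ^{\alpha }\right) $ of Theorem 2.11 — and the verification that $S$ inherits SVEP, both handled as above.
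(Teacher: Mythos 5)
Your proof is correct and follows exactly the route the paper intends: the paper offers no written proof, calling the result an obvious consequence of Theorem 2.11, and your argument supplies precisely the missing details (the substitution $S=iT$, the verification via the resolvent integrals that $\sigma \left( S\right) \subset i\mathbb{R}$ so that $S$ has SVEP, and the monotonicity of $t\mapsto t+C\left( \alpha \right) t^{1-\alpha }$ for the operator-norm statement). Nothing further is needed.
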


An operator $A\in B\left( X\right) $ is called \textit{normal} if $A=T+iS$
with $T$, $S$ are commuting Hermitian operators on $X.$ A normal operator on
a Banach space is $C^{2}\left( 
\mathbb{C}
\right) -$scalar and therefore it is decomposable \cite{2}. If $A$ is a
normal operator on a Banach space, then $\left\Vert A\right\Vert \leq
2r\left( A\right) $ (see, Lemma 2.15). In \cite[Corollary 3]{2}, it was
proved that if $A=T+iS$ is a normal operator on a Banach space $X$ and $%
r_{A}\left( x\right) =0$ for some $x\in X$, then $Tx=Sx=0.$ More general
result was obtained in \cite[Corollary 4]{4}: 
\begin{equation*}
\max \left\{ \left\Vert Tx\right\Vert ,\left\Vert Sx\right\Vert \right\}
\leq r_{A}\left( x\right) \left\Vert x\right\Vert ,\text{ \ }\forall x\in X.
\end{equation*}

We have the following:

\begin{theorem}
Let $T$ and $S$ are two commuting operators in $B\left( X\right) $
satisfying the condition $(2.6)$ with constants $\left\{ C_{T},\alpha
\right\} $ and $\left\{ C_{S},\alpha \right\} ,$ respectively. If $0\leq
\alpha <1,$ then for every $x\in X,$ 
\begin{equation*}
\max \left\{ \left\Vert Tx\right\Vert ,\left\Vert Sx\right\Vert \right\}
\leq \max \left\{ C_{T},C_{S}\right\} \left[ r_{T+iS}\left( x\right)
+C\left( \alpha \right) r_{T+iS}\left( x\right) ^{1-\alpha }\right]
\left\Vert x\right\Vert ,
\end{equation*}%
where $C\left( \alpha \right) $ is defined by $\left( 2.7\right) .$
\end{theorem}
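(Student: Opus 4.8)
The plan is to derive the estimate from Corollary 2.13, applied to $T$ and to $S$ separately, together with the local spectral radius comparisons $r_T(x)\le r_{T+iS}(x)$ and $r_S(x)\le r_{T+iS}(x)$. Put $A:=T+iS$ and $g(r):=r+C(\alpha)r^{1-\alpha}$ for $r\ge 0$; since $0\le\alpha<1$ and $C(\alpha)>0$, the function $g$ is nondecreasing on $[0,\infty)$ with $g(0)=0$. As each of $T,S$ satisfies $(2.6)$ with exponent $\alpha$, Corollary 2.13 gives, for every $x\in X$,
\[ \|Tx\|\le C_T\, g(r_T(x))\,\|x\|,\qquad \|Sx\|\le C_S\, g(r_S(x))\,\|x\|. \]
Granting $r_T(x)\le r_A(x)$ and $r_S(x)\le r_A(x)$, monotonicity of $g$ together with $C_T,C_S\le\max\{C_T,C_S\}$ immediately yields
\[ \max\{\|Tx\|,\|Sx\|\}\le\max\{C_T,C_S\}\big[r_A(x)+C(\alpha)\,r_A(x)^{1-\alpha}\big]\|x\|, \]
which is exactly the assertion. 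So the whole matter reduces to the two spectral radius comparisons; I treat $T$, the argument for $S$ being identical.

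Since $\|e^{itT}\|$ and $\|e^{itS}\|$ grow only polynomially in $t$, the spectra of $T$ and $S$ are real, so by the characterization recalled in connection with $(2.6)$ the operators $T$ and $S$ are generalized scalar. As they commute, their $C^{\infty}$-functional calculi have commuting ranges, and hence $A=T+iS$ is again generalized scalar, in particular decomposable, and thus has SVEP. Put $F:=\sigma_A(x)$ and $Y:=X_A(F):=\{\,y\in X:\sigma_A(y)\subseteq F\,\}$, the local spectral subspace of $A$ associated with $F$; since $A$ is decomposable, $Y$ is a closed subspace, $x\in Y$, and $\sigma(A|_Y)\subseteq F$. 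Because $T$ and $S$ commute with $A$ and $A$ has SVEP, a commuting operator does not enlarge the $A$-local spectrum, so $TY\subseteq Y$ and $SY\subseteq Y$. Hence $Y$ is invariant under $T$, $S$ and $A$, with $A|_Y=T|_Y+iS|_Y$ a sum of two commuting operators whose spectra are real. Applying the spectral mapping theorem for the (Taylor) joint spectrum of the commuting pair $(T|_Y,S|_Y)$ to the polynomials $z_1$ and $z_1+iz_2$, and using that $(z_1,z_2)\mapsto z_1+iz_2$ carries $\mathbb{R}^2$ bijectively onto $\mathbb{C}$, we obtain $\sigma(T|_Y)=\mathrm{Re}\,(\sigma(A|_Y))\subseteq\mathrm{Re}\,(F)$.

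To conclude, observe that $\sigma_T(x)\subseteq\sigma(T)\subseteq\mathbb{R}$, so it is enough to check that no real $\lambda_0\notin\mathrm{Re}\,(F)$ belongs to $\sigma_T(x)$. For such $\lambda_0$ we have $\lambda_0\notin\sigma(T|_Y)$, so $u(z):=(zI-T|_Y)^{-1}x$ defines a $Y$-valued (hence $X$-valued) analytic function on a neighborhood of $\lambda_0$ with $(zI-T)u(z)=x$, i.e.\ $\lambda_0\in\rho_T(x)$. Therefore $\sigma_T(x)\subseteq\mathrm{Re}\,(\sigma_A(x))$, and
\[ r_T(x)=\sup_{\lambda\in\sigma_T(x)}|\lambda|\le\sup_{\zeta\in\sigma_A(x)}|\mathrm{Re}\,\zeta|\le\sup_{\zeta\in\sigma_A(x)}|\zeta|=r_A(x); \]
by symmetry $r_S(x)\le r_A(x)$ as well. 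The main obstacle, and the only point where commutativity of $T$ and $S$ is used, is precisely this passage to the joint spectral picture on $Y$ (the identity $\sigma(T|_Y)=\mathrm{Re}\,(\sigma(A|_Y))$); once the comparisons $r_T(x),r_S(x)\le r_{T+iS}(x)$ are known, the conclusion is a direct combination of Corollary 2.13 with the monotonicity of $g$.
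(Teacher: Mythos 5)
Your proof is correct, and its top layer --- combine Corollary 2.13 with the comparison $\max\{r_T(x),r_S(x)\}\leq r_{T+iS}(x)$ and the monotonicity of $r\mapsto r+C(\alpha)r^{1-\alpha}$ --- is exactly the paper's reduction (the comparison is the paper's Lemma 2.16). Where you genuinely diverge is in how you prove that comparison. The paper takes $Y$ to be the closed linear span of $\{T^nS^mx\}$, invokes Dunford's property $(C)$ for the decomposable operator $A=T+iS$ to get the \emph{equality} $r(A|_Y)=r_{A}(x)$ from cyclicity of $x$, and then compares $r(T|_Y)$ with $r(A|_Y)$ by elementary Gelfand theory in a maximal commutative subalgebra (Lemma 2.15). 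You instead take $Y=X_A(\sigma_A(x))$, use decomposability only through the closedness of the local spectral subspace and the inclusion $\sigma(A|_Y)\subseteq\sigma_A(x)$, and then pass to the Taylor joint spectrum of $(T|_Y,S|_Y)$ with its projection and spectral mapping properties to get $\sigma(T|_Y)=\mathrm{Re}\,\sigma(A|_Y)$. Your route yields the slightly sharper conclusion $\sigma_T(x)\subseteq\mathrm{Re}\,\sigma_{T+iS}(x)$ (not just the radius bound), and avoids property $(C)$ and the cyclic-vector identity $\sigma(A|_Y)=\sigma_{A|_Y}(x)$; the price is heavier machinery (Taylor spectrum) where the paper's Lemma 2.15 gets by with characters of a full commutative subalgebra --- in fact you could substitute Lemma 2.15 for your joint-spectrum step verbatim and shorten the argument. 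Two small points to tighten: the assertion that $T+iS$ is generalized scalar because the two $C^\infty$-calculi commute is the nontrivial theorem of Albrecht that the paper cites explicitly, so you should cite it rather than wave at it; and you should note explicitly that $e^{itT}$ leaves $Y=X_A(F)$ invariant (it commutes with $A$), which is what makes $T|_Y$ satisfy $(2.6)$ on $Y$ and hence have real spectrum.
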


For the proof, we need some preliminary results.

\begin{lemma}
If $T$ and $S$ are two commuting operators in $B\left( X\right) $ with real
spectra, then 
\begin{equation*}
\max \left\{ r\left( T\right) ,r\left( S\right) \right\} \leq r\left(
T+iS\right) .
\end{equation*}

\begin{proof}
Let $\mathcal{A}$ be the maximal commutative subalgebra of $B\left( X\right) 
$ containing $T$ and $S.$ Clearly, $\mathcal{A}$ is unital. Denote by $%
\sigma _{\mathcal{A}}\left( A\right) $ the spectrum of $A\in \mathcal{A}$
with respect to the algebra $\mathcal{A}$. Let $\Sigma _{\mathcal{A}}$ be
the maximal ideal space of $\mathcal{A}$. Since $\mathcal{A}$ is a full
subalgebra of $B\left( X\right) ,$ we have 
\begin{eqnarray*}
\sigma \left( T+iS\right) &=&\sigma _{\mathcal{A}}\left( T+iS\right) \\
&=&\left\{ \phi \left( T+iS\right) :\phi \in \Sigma _{\mathcal{A}}\right\} \\
&=&\left\{ \phi \left( T\right) +i\phi \left( S\right) :\phi \in \Sigma _{%
\mathcal{A}}\right\}
\end{eqnarray*}%
and therefore, 
\begin{equation*}
r\left( T+iS\right) =\sup_{\phi \in \Sigma _{\mathcal{A}}}\left\vert \phi
\left( T\right) +i\phi \left( S\right) \right\vert .
\end{equation*}%
Similarly, as%
\begin{equation*}
\sigma \left( T\right) =\left\{ \phi \left( T\right) :\phi \in \Sigma _{%
\mathcal{A}}\right\} \text{ and }\sigma \left( S\right) =\left\{ \phi \left(
S\right) :\phi \in \Sigma _{\mathcal{A}}\right\} ,
\end{equation*}%
we have%
\begin{equation*}
r\left( T\right) =\sup_{\phi \in \Sigma _{\mathcal{A}}}\left\vert \phi
\left( T\right) \right\vert \text{ and }r\left( S\right) =\sup_{\phi \in
\Sigma _{\mathcal{A}}}\left\vert \phi \left( S\right) \right\vert .
\end{equation*}%
Since $\phi \left( T\right) $ and $\phi \left( S\right) $ are reals, we
obtain that 
\begin{equation*}
r\left( T\right) \leq r\left( T+iS\right) \text{ and }r\left( S\right) \leq
r\left( T+iS\right) .
\end{equation*}
\end{proof}
\end{lemma}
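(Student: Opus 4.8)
The plan is to reduce the spectral statement to a pointwise inequality on the Gelfand spectrum of a suitable commutative Banach algebra. First I would pass to a maximal commutative subalgebra $\mathcal{A}$ of $B(X)$ containing both $T$ and $S$; such an algebra exists by Zorn's lemma and is automatically closed and unital. The crucial preliminary step is to verify that $\mathcal{A}$ is a \emph{full} (inverse-closed) subalgebra, so that the spectrum of any $A\in\mathcal{A}$ computed in $\mathcal{A}$ coincides with its spectrum in $B(X)$. This follows because if $A\in\mathcal{A}$ is invertible in $B(X)$, then from $AB=BA$ for every $B\in\mathcal{A}$ one deduces $A^{-1}B=BA^{-1}$, so $A^{-1}$ commutes with all of $\mathcal{A}$; by maximality $A^{-1}\in\mathcal{A}$.

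Once $\mathcal{A}$ is full, Gelfand theory for the commutative unital Banach algebra $\mathcal{A}$ gives, for each $A\in\mathcal{A}$,
\[
\sigma(A)=\sigma_{\mathcal{A}}(A)=\{\phi(A):\phi\in\Sigma_{\mathcal{A}}\},
\]
where $\Sigma_{\mathcal{A}}$ denotes the maximal ideal space. Applying this to $T$, $S$, and $T+iS$, and using the multiplicativity and linearity of each character $\phi$, I obtain $\sigma(T+iS)=\{\phi(T)+i\phi(S):\phi\in\Sigma_{\mathcal{A}}\}$, together with the spectral radius formulas $r(T+iS)=\sup_{\phi}|\phi(T)+i\phi(S)|$, $r(T)=\sup_{\phi}|\phi(T)|$, and $r(S)=\sup_{\phi}|\phi(S)|$.

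The hypothesis that $T$ and $S$ have real spectra enters at the final step. Since $\sigma(T)\subseteq\mathbb{R}$ and $\sigma(S)\subseteq\mathbb{R}$, every value $\phi(T)$ and $\phi(S)$ is real, so for each fixed $\phi$ the numbers $\phi(T)$ and $\phi(S)$ are precisely the real and imaginary parts of $\phi(T)+i\phi(S)$. This gives the elementary pointwise bounds $|\phi(T)|\leq|\phi(T)+i\phi(S)|$ and $|\phi(S)|\leq|\phi(T)+i\phi(S)|$; taking the supremum over $\phi\in\Sigma_{\mathcal{A}}$ and invoking the radius formulas yields $r(T)\leq r(T+iS)$ and $r(S)\leq r(T+iS)$, which is the desired conclusion.

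I expect the only genuine (though still routine) obstacle to be the fullness of $\mathcal{A}$: one must be careful that maximality of the commutative subalgebra forces inverse-closedness, and hence that the spectra do not shrink when passed from $B(X)$ to $\mathcal{A}$. After that point the argument is purely Gelfand-theoretic, and the final inequality is just the observation that the modulus of a complex number dominates the absolute values of both its real and imaginary parts.
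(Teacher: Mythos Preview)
Your proposal is correct and follows essentially the same route as the paper: pass to a maximal commutative subalgebra $\mathcal{A}\subset B(X)$, use its fullness to identify spectra with Gelfand ranges, and conclude from the fact that $\phi(T),\phi(S)\in\mathbb{R}$ that $|\phi(T)|,|\phi(S)|\le|\phi(T)+i\phi(S)|$. If anything, you supply a bit more detail than the paper (the Zorn argument and the explicit verification of inverse-closedness), but there is no substantive difference.
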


For a closed subset $F$ of $%
\mathbb{C}
,$ the \textit{local} \textit{spectral subspace} $X_{T}\left( F\right) $ of $%
T\in B\left( X\right) $ is defined by 
\begin{equation*}
X_{T}\left( F\right) =\left\{ x\in X:\sigma _{T}\left( x\right) \subseteq
F\right\} .
\end{equation*}%
An operator $T\in B\left( X\right) $ has \textit{Dunford's property }$\left(
C\right) $ if $X_{T}\left( F\right) $ is closed for every closed set $%
F\subset 
\mathbb{C}
.$ Every decomposable operator has Dunford's property\textit{\ }$\left(
C\right) $ \cite[Theorem 1.2.23]{12}. If $T$ has Dunford's property\textit{\ 
}$\left( C\right) ,$ then $\sigma \left( T\right) =\sigma _{T}\left(
x\right) $ holds for every cyclic vector $x$ for $T$ \cite[p.238]{12}.

\begin{lemma}
Let $T$ and $S$ are two commuting operators in $B\left( X\right) $
satisfying the condition $(2.6)$ with constants $\left\{ C_{T},\alpha
\right\} $ and $\left\{ C_{S},\beta \right\} ,$ respectively. Then, for
every $x\in X$ one has 
\begin{equation*}
\max \left\{ r_{T}\left( x\right) ,r_{S}\left( x\right) \right\} \leq
r_{T+iS}\left( x\right) .
\end{equation*}
\end{lemma}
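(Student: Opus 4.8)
The plan is to reduce this local statement to its global counterpart, Lemma~2.15, by restricting $T$ and $S$ to the spectral maximal space of $A:=T+iS$ associated with $\sigma_{T+iS}(x)$ (the case $x=0$ being trivial). First I would observe that condition $(2.6)$ forces $\sigma(T)\subseteq\mathbb{R}$ and $\sigma(S)\subseteq\mathbb{R}$: if $\mu=a+ib\in\sigma(T)$, then $e^{it\mu}\in\sigma(e^{itT})$, so $e^{-bt}=|e^{it\mu}|\le\|e^{itT}\|\le C_{T}(1+|t|^{\alpha})$ for all $t\in\mathbb{R}$, which is impossible unless $b=0$. Hence, by \cite[Theorem 1.5.19]{12}, $T$ and $S$ are generalized scalar operators, and since they commute, $A=T+iS$ is again generalized scalar by \cite{5}; in particular $A$ is decomposable, so it has the single-valued extension property and Dunford's property~$(C)$.

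Next I would set $F:=\sigma_{T+iS}(x)$, a compact subset of $\mathbb{C}$, and $Y:=X_{A}(F)$. Since $A$ is decomposable, $Y$ is a closed $A$-invariant subspace, it is the spectral maximal space of $A$ associated with $F$, and $\sigma(A|_{Y})\subseteq F$, whence $r(A|_{Y})\le r_{T+iS}(x)$. Clearly $x\in Y$. Moreover, $T$ and $S$ commute with $A$, so for $R\in\{T,S\}$ and $y\in Y$ one has $\sigma_{A}(Ry)\subseteq\sigma_{A}(y)\subseteq F$ (if $(zI-A)u(z)=y$ near a point of $\rho_{A}(y)$ with $u$ analytic, then $(zI-A)(Ru(z))=Ry$), so $Y$ is invariant under $T$ and under $S$. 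Finally, $e^{itT}$ commutes with $A$ and hence maps $Y$ into $Y$, and $\|e^{it(T|_{Y})}\|=\|(e^{itT})|_{Y}\|\le\|e^{itT}\|\le C_{T}(1+|t|^{\alpha})$; thus $T|_{Y}$, and likewise $S|_{Y}$, satisfy $(2.6)$ on the Banach space $Y$, and therefore (by the argument of the first step applied in $B(Y)$) have real spectra.

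Now I would apply Lemma~2.15 to the commuting pair $T|_{Y},\,S|_{Y}\in B(Y)$ with real spectra, obtaining $\max\{r(T|_{Y}),r(S|_{Y})\}\le r((T+iS)|_{Y})=r(A|_{Y})\le r_{T+iS}(x)$. It then remains to transfer the local spectra from $Y$ back to $X$: if $\lambda\in\rho_{T|_{Y}}(x)$, then an analytic $Y$-valued solution of $(zI-T|_{Y})u(z)=x$ near $\lambda$ is also an analytic $X$-valued solution of $(zI-T)u(z)=x$, so $\rho_{T|_{Y}}(x)\subseteq\rho_{T}(x)$, i.e. $\sigma_{T}(x)\subseteq\sigma_{T|_{Y}}(x)\subseteq\sigma(T|_{Y})$, and hence $r_{T}(x)\le r(T|_{Y})$; the same argument gives $r_{S}(x)\le r(S|_{Y})$. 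Combining these inequalities yields $\max\{r_{T}(x),r_{S}(x)\}\le r_{T+iS}(x)$.

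The step I expect to be the main obstacle is establishing that $A=T+iS$ is decomposable, since this is precisely what makes $Y=X_{A}(\sigma_{T+iS}(x))$ a closed spectral maximal space with $\sigma(A|_{Y})\subseteq\sigma_{T+iS}(x)$; it rests on the fact that a sum of two commuting generalized scalar operators is again generalized scalar, which I would want to cite from \cite{5} in exactly that form. Everything else---the inheritance of $(2.6)$ by restriction to $Y$, the invariance of $Y$ under $T$ and $S$, and the comparison between the local spectra of $T|_{Y}$ and of $T$---is routine.
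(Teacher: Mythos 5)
Your proof is correct, but it takes a genuinely different route from the paper's. Both arguments reduce the local statement to Lemma 2.15 on a closed invariant subspace $Y$ containing $x$ on which $T|_Y$ and $S|_Y$ inherit condition (2.6) (hence have real spectra), and both then transfer back via $\sigma_T(x)\subseteq\sigma_{T|_Y}(x)\subseteq\sigma(T|_Y)$. The difference is the choice of $Y$: the paper takes $Y$ to be the closed linear span of $\{T^nS^mx\}$ and gets the key bound $r\left((T+iS)|_Y\right)=r_{T+iS}(x)$ from the fact that $\sigma(B)=\sigma_B(x)$ for a cyclic vector $x$ of an operator $B$ with Dunford's property $(C)$ \cite[p.238]{12}; you instead take $Y=X_A\left(\sigma_A(x)\right)$ with $A=T+iS$ and get $r(A|_Y)\leq r_{T+iS}(x)$ from the spectral maximal space inclusion $\sigma(A|_{X_A(F)})\subseteq F$. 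Your choice buys something: it avoids the slightly delicate point of whether $x$ is actually a cyclic vector for $(T+iS)|_Y$ on the span of the \emph{two-parameter} orbit $\{T^nS^mx\}$, and it only needs $\sigma_A(Ty)\subseteq\sigma_A(y)$ for commuting $T$, which you verify directly. The one place where you should align with the paper is the decomposability of $T+iS$: the fact actually invoked is that $T+iS$ is $C^k\left(\mathbb{C}\right)$-scalar for $k>\alpha+\beta+1$, which comes from Albrecht's multivariable functional calculus \cite[Satz 3.3 and Lemma 3.5]{1}, not from \cite{5}; the blanket claim that a sum of commuting generalized scalar operators is generalized scalar is stronger than what you need and is not what either source states, so cite \cite{1} for decomposability as the paper does. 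With that citation repaired, every step of your argument is sound.
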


\begin{proof}
Note that the operator $T+iS$ is $C^{k}\left( 
\mathbb{C}
\right) -$scalar with $k>\alpha +\beta +1$ \cite[Satz 3.3 and Lemma 3.5]{1}.
Consequently, $T+iS$ is decomposable and therefore it has Dunford's property%
\textit{\ }$\left( C\right) .$ For a given $x\in X,$ let $Y$ denote the
closed linear span of 
\begin{equation*}
\left\{ T^{n}S^{m}x:n,m\in 
\mathbb{N}
\cup \left\{ 0\right\} \right\} .
\end{equation*}%
Then the space $Y$ is invariant under $T+iS.$ Consequently, the operator $%
T\mid _{Y}+iS\mid _{Y}$ also has Dunford's property\textit{\ }$\left(
C\right) $ \cite[Proposition 1.2.21]{12}. Since $x$ is a cyclic vector of $%
T\mid _{Y}+iS\mid _{Y}$, we have 
\begin{equation*}
r\left( T\mid _{Y}+iS\mid _{Y}\right) =r_{T+iS}\left( x\right) .
\end{equation*}%
Observe that the operators $T\mid _{Y}$ and $S\mid _{Y}$ also satisfy the
condition (2.6) with constants ($C_{T},\alpha )$ and ($C_{S},\beta )$,
respectively. Therefore, the spectra of the operators $T\mid _{Y}$ and $%
S\mid _{Y}$ are reals. By Lemma 2.15, 
\begin{equation*}
\max \left\{ r\left( T\mid _{Y}\right) ,r\left( S\mid _{Y}\right) \right\}
\leq r\left( T\mid _{Y}+iS\mid _{Y}\right) =r_{T+iS}\left( x\right) .
\end{equation*}%
Since $\sigma _{T}\left( x\right) \subseteq \sigma _{T\mid _{Y}}\left(
x\right) $, we get%
\begin{equation*}
r_{T}\left( x\right) \leq r_{T\mid _{Y}}\left( x\right) \leq r\left( T\mid
_{Y}\right) .
\end{equation*}%
Similarly, $r_{S}\left( x\right) \leq r\left( S\mid _{Y}\right) .$ So we
have 
\begin{equation*}
\max \left\{ r_{T}\left( x\right) ,r_{S}\left( x\right) \right\} \leq \max
\left\{ r\left( T\mid _{Y}\right) ,r\left( S\mid _{Y}\right) \right\} \leq
r_{T+iS}\left( x\right) .
\end{equation*}
\end{proof}

Now we can prove Theorem 2.14.

\begin{proof}[Proof of Theorem 2.14]
For an arbitrary $x\in X,$ by Corollary 2.13 and Lemma 2.16, we can write%
\begin{eqnarray*}
\left\Vert Tx\right\Vert &\leq &C_{T}\left[ r_{T}\left( x\right) +C\left(
\alpha \right) r_{T}\left( x\right) ^{1-\alpha }\right] \left\Vert
x\right\Vert \\
&\leq &C_{T}\left[ r_{T+iS}\left( x\right) +C\left( \alpha \right)
r_{T+iS}\left( x\right) ^{1-\alpha }\right] \left\Vert x\right\Vert .
\end{eqnarray*}%
Similarly,%
\begin{equation*}
\left\Vert Sx\right\Vert \leq C_{S}\left[ r_{T+iS}\left( x\right) +C\left(
\alpha \right) r_{T+iS}\left( x\right) ^{1-\alpha }\right] \left\Vert
x\right\Vert .
\end{equation*}%
So we have 
\begin{equation*}
\max \left\{ \left\Vert Tx\right\Vert ,\left\Vert Sx\right\Vert \right\}
\leq \max \left\{ C_{T},C_{S}\right\} \left[ r_{T+iS}\left( x\right)
+C\left( \alpha \right) r_{T+iS}\left( x\right) ^{1-\alpha }\right]
\left\Vert x\right\Vert .
\end{equation*}
\end{proof}

\section{Applications}

For an invertible operator $A$ on a Hilbert space $H$, Deddens \cite{6}
introduced the set 
\begin{equation*}
\mathcal{B}_{A}:=\left\{ T\in B\left( H\right) :\sup_{n\in 
\mathbb{N}
}\left\Vert A^{n}TA^{-n}\right\Vert <\infty \right\} .
\end{equation*}%
Notice that $\mathcal{B}_{A}$ is a normed (not necessarily closed) algebra
with identity and contains the commutant $\left\{ A\right\} ^{\prime }$ of $%
A.$ In the same paper, Deddens shows that if $A\geq 0,$ then $\mathcal{B}%
_{A} $ coincides with the nest algebra associated with the nest of spectral
subspaces of $A.$ This gives a new and convenient characterization of nest
algebras. In \cite{6}, Deddens conjectured that the equality $\mathcal{B}%
_{A}=\left\{ A\right\} ^{\prime }$ holds if the spectrum of $A$ is reduced
to $\left\{ 1\right\} .$ In \cite{18}, Roth gave a negative answer to
Deddens conjecture. Williams \cite{19} proved that if $A\in B\left( X\right) 
$ is a quasinilpotent and $T\in B\left( X\right) $ satisfies the condition $%
\sup_{t\in 
\mathbb{R}
}\left\Vert e^{tA}Te^{-tA}\right\Vert <\infty ,$ then $AT=TA$ (for related
results see also \cite{15}).

For a fixed $A\in B\left( X\right) $ and $\alpha \geq 0,$ we consider the
class $\mathcal{D}_{A}^{\alpha }\left( 
\mathbb{R}
\right) $ of all operators $T\in B\left( X\right) $ for which there exists a
constant $C_{T}>0$ such that 
\begin{equation*}
\left\Vert e^{tA}Te^{-tA}\right\Vert \leq C_{T}\left( 1+\left\vert
t\right\vert \right) ^{\alpha },\text{ \ }\forall t\in 
\mathbb{R}
.
\end{equation*}%
Notice that $\mathcal{D}_{A}^{\alpha }\left( 
\mathbb{R}
\right) $ is a linear (not necessarily closed) subspace of $B\left( X\right) 
$ and contains the commutant of $A.$

As an applications of the results of the preceding section, here we give
complete characterization of the class $\mathcal{D}_{A}^{\alpha }\left( 
\mathbb{R}
\right) $ in the case when the spectrum of $A$ consists of one-point.

Let $A\in B\left( X\right) $ and $\Delta _{A}$ be the inner derivation on $%
B\left( X\right) ;$%
\begin{equation*}
\Delta _{A}:T\mapsto AT-TA,\text{ \ }T\in B\left( X\right) .
\end{equation*}%
We have%
\begin{equation*}
\Delta _{A}^{n}\left( T\right) =\sum\limits_{k=0}^{n}\left( -1\right) ^{k}%
\binom{n}{k}A^{n-k}TA^{k}\text{ \ }\left( n\in 
\mathbb{N}
\right) .
\end{equation*}

\begin{theorem}
For every operator $A\in B\left( X\right) $ with real spectrum we have 
\begin{equation*}
\mathcal{D}_{A}^{\alpha }\left( 
\mathbb{R}
\right) =\ker \Delta _{A}^{\left[ \alpha \right] +1}.
\end{equation*}%
In particular if $0\leq \alpha <1,$ then $\mathcal{D}_{A}^{\alpha }\left( 
\mathbb{R}
\right) =\left\{ A\right\} ^{\prime }.$
\end{theorem}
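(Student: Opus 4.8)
The plan is to read the whole statement off as an instance of Corollary~2.8 applied to the inner derivation $\Delta_A$, regarded as a bounded operator on the Banach space $B(X)$. The key identity is
\[
e^{t\Delta_A}(T)=e^{tA}Te^{-tA},\qquad t\in\mathbb{R},
\]
which follows by writing $\Delta_A=L_A-R_A$ with $L_A(T)=AT$ and $R_A(T)=TA$ two commuting operators on $B(X)$, so that $e^{t\Delta_A}=e^{tL_A}e^{-tR_A}$. Consequently $T\in\mathcal{D}_A^{\alpha}(\mathbb{R})$ if and only if $\|e^{t\Delta_A}(T)\|\le C_T\,\omega(t)$ for all $t$, with $\omega(t)=(1+|t|)^{\alpha}$; that is, $\mathcal{D}_A^{\alpha}(\mathbb{R})=E_{\Delta_A}^{\omega}$ in the notation of Section~2, with $X$ there replaced by $B(X)$ and $T$ there replaced by $\Delta_A$.

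For the inclusion $\mathcal{D}_A^{\alpha}(\mathbb{R})\subseteq\ker\Delta_A^{[\alpha]+1}$, I would first record that $\sigma(\Delta_A)\subseteq\sigma(L_A)-\sigma(R_A)=\sigma(A)-\sigma(A)$ (the standard spectral inclusion for the difference of commuting multiplication operators), so the hypothesis $\sigma(A)\subseteq\mathbb{R}$ forces $\sigma(\Delta_A)\subseteq\mathbb{R}$; in particular $\sigma(\Delta_A)$ has empty interior, hence $\Delta_A$ has SVEP. Now fix $T\in\mathcal{D}_A^{\alpha}(\mathbb{R})$ with $T\ne0$. Proposition~2.2, applied to $\Delta_A$ on $B(X)$ with the vector $T$, gives $\sigma_{\Delta_A}(T)\subseteq i\mathbb{R}$, while at the same time $\sigma_{\Delta_A}(T)\subseteq\sigma(\Delta_A)\subseteq\mathbb{R}$; therefore $\sigma_{\Delta_A}(T)\subseteq\{0\}$. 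Since $\Delta_A$ has SVEP and $T\ne0$, the local spectrum is non-empty, so $\sigma_{\Delta_A}(T)=\{0\}$, and Corollary~2.8 (whose hypotheses are now all satisfied, with $\lambda=0$) yields $\Delta_A^{[\alpha]+1}(T)=0$. The case $T=0$ is trivial.

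For the reverse inclusion $\ker\Delta_A^{[\alpha]+1}\subseteq\mathcal{D}_A^{\alpha}(\mathbb{R})$, which needs no spectral hypothesis on $A$, suppose $\Delta_A^{k+1}(T)=0$ with $k=[\alpha]$. Then the exponential series terminates,
\[
e^{tA}Te^{-tA}=e^{t\Delta_A}(T)=\sum_{j=0}^{k}\frac{t^{j}}{j!}\,\Delta_A^{j}(T),
\]
so $\|e^{tA}Te^{-tA}\|\le\Bigl(\sum_{j=0}^{k}\frac{1}{j!}\|\Delta_A^{j}(T)\|\Bigr)(1+|t|)^{k}\le C_T(1+|t|)^{\alpha}$, using $|t|^{j}\le(1+|t|)^{k}$ for $0\le j\le k$ and $k\le\alpha$. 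Hence $T\in\mathcal{D}_A^{\alpha}(\mathbb{R})$. Finally, for $0\le\alpha<1$ we have $[\alpha]=0$, so $\ker\Delta_A^{[\alpha]+1}=\ker\Delta_A=\{A\}'$.

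I do not expect a genuine obstacle here: the computations are routine and the nontrivial inputs (Proposition~2.2 and Corollary~2.8) are already in place. The one point needing care is the bookkeeping of the reduction — verifying that the defining growth condition of $\mathcal{D}_A^{\alpha}(\mathbb{R})$ is literally the hypothesis of Theorem~2.1 for $\Delta_A$ acting on $B(X)$, and that the real-spectrum assumption on $A$ is exactly what makes $\sigma(\Delta_A)$ real, which is what lets me combine Proposition~2.2 with $\sigma_{\Delta_A}(T)\subseteq\sigma(\Delta_A)$ to collapse the local spectrum to $\{0\}$.
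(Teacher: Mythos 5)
Your proposal is correct and follows essentially the same route as the paper: reduce to the inner derivation $\Delta_A$ on $B(X)$ via $e^{t\Delta_A}(T)=e^{tA}Te^{-tA}$, use $\sigma(\Delta_A)\subseteq\sigma(A)-\sigma(A)\subseteq\mathbb{R}$ together with Proposition~2.2 to force $\sigma_{\Delta_A}(T)=\{0\}$, apply Corollary~2.8, and get the converse from the terminating exponential series. The only (harmless) differences are that you treat the case $T=0$ explicitly and phrase the converse estimate directly in terms of $k=[\alpha]\le\alpha$ rather than via the minimal $n$ with $\Delta_A^{n+1}(T)=0$.
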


\begin{proof}
If $T\in \mathcal{D}_{A}^{\alpha }\left( 
\mathbb{R}
\right) ,$ then as 
\begin{equation*}
e^{tA}Te^{-tA}=e^{t\Delta _{A}}\left( T\right) ,
\end{equation*}%
we have%
\begin{equation*}
\left\Vert e^{t\Delta _{A}}\left( T\right) \right\Vert \leq C_{T}\left(
1+\left\vert t\right\vert \right) ^{\alpha }\text{, \ }\forall t\in 
\mathbb{R}
.
\end{equation*}%
By Proposition 2.2, $\sigma _{\Delta _{A}}\left( T\right) \subset i%
\mathbb{R}
.$ Further, since $\sigma \left( A\right) \subset 
\mathbb{R}
$ and 
\begin{equation*}
\sigma \left( \Delta _{A}\right) =\left\{ \lambda -\mu :\lambda ,\mu \in
\sigma \left( A\right) \right\}
\end{equation*}%
\cite[Theorem 3.5.1]{12}$,$ we have $\sigma \left( \Delta _{A}\right)
\subset 
\mathbb{R}
$. Therefore $\Delta _{A}$ has SVEP. On the other hand, as 
\begin{equation*}
\sigma _{\Delta _{A}}\left( T\right) \subseteq \sigma \left( \Delta
_{A}\right) \subset 
\mathbb{R}
,
\end{equation*}%
we get 
\begin{equation*}
\sigma _{\Delta _{A}}\left( T\right) \subseteq 
\mathbb{R}
\cap i%
\mathbb{R}
=\left\{ 0\right\} .
\end{equation*}%
Since $\Delta _{A}$ has SVEP, $\sigma _{\Delta _{A}}\left( T\right) \neq
\emptyset $ and so $\sigma _{\Delta _{A}}\left( T\right) =\left\{ 0\right\}
. $ Applying Corollary 2.8 to the operator $\Delta _{A}$ on the space $%
B\left( X\right) ,$ we obtain that%
\begin{equation*}
\Delta _{A}^{[\alpha ]+1}\left( T\right) =0.
\end{equation*}%
For the reverse inclusion, let $T\in B\left( X\right) $ and assume that
there exists $n\in 
\mathbb{N}
$ such that $\Delta _{A}^{n+1}\left( T\right) =0$ and $\Delta _{A}^{n}\left(
T\right) \neq 0.$ Then 
\begin{eqnarray*}
\left\Vert e^{tA}Te^{-tA}\right\Vert &=&\left\Vert e^{t\Delta _{A}}\left(
T\right) \right\Vert \\
&=&\left\Vert I+\frac{t\Delta _{A}\left( T\right) }{1!}+...+\frac{%
t^{n}\Delta _{A}^{n}\left( T\right) }{n!}\right\Vert \\
&=&O\left( 1+\left\vert t\right\vert \right) ^{n}\text{ \ }\left( \left\vert
t\right\vert \rightarrow \infty \right) .
\end{eqnarray*}%
This shows that $T\in \mathcal{D}_{A}^{n}\left( 
\mathbb{R}
\right) .$
\end{proof}

The following corollary generalizes the Williams result mentioned above.

\begin{corollary}
For every operator $A\in B\left( X\right) $ with one-point spectrum, we have 
\begin{equation*}
\mathcal{D}_{A}^{\alpha }\left( 
\mathbb{R}
\right) =\ker \Delta _{A}^{\left[ \alpha \right] +1}.
\end{equation*}%
In particular if $0\leq \alpha <1,$ then $\mathcal{D}_{A}^{\alpha }\left( 
\mathbb{R}
\right) =\left\{ A\right\} ^{\prime }.$
\end{corollary}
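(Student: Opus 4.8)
The plan is to reduce the statement to Theorem 2.18 by an elementary translation. Since $\sigma \left( A\right) $ is a singleton, write $\sigma \left( A\right) =\left\{ \lambda _{0}\right\} $ with $\lambda _{0}\in \mathbb{C}$ and set $N:=A-\lambda _{0}I$. Then $N$ is quasinilpotent, so in particular $\sigma \left( N\right) =\left\{ 0\right\} \subset \mathbb{R}$, and Theorem 2.18 is applicable to $N$.

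First I would check that passing from $A$ to $N$ changes neither side of the claimed identity. On the one hand, $\lambda _{0}I$ is central in $B\left( X\right) $, so $e^{tA}=e^{t\lambda _{0}}e^{tN}$ for all $t\in \mathbb{R}$, and the scalar factors cancel in the conjugation:
\[
e^{tA}Te^{-tA}=e^{t\lambda _{0}}e^{tN}Te^{-tN}e^{-t\lambda _{0}}=e^{tN}Te^{-tN}.
\]
Hence $T\in \mathcal{D}_{A}^{\alpha }\left( \mathbb{R}\right) $ if and only if $T\in \mathcal{D}_{N}^{\alpha }\left( \mathbb{R}\right) $, that is, $\mathcal{D}_{A}^{\alpha }\left( \mathbb{R}\right) =\mathcal{D}_{N}^{\alpha }\left( \mathbb{R}\right) $. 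On the other hand, $\Delta _{A}\left( T\right) =AT-TA=NT-TN=\Delta _{N}\left( T\right) $ for every $T\in B\left( X\right) $, since the $\lambda _{0}I$ terms cancel; therefore $\Delta _{A}=\Delta _{N}$ as operators on $B\left( X\right) $, and in particular $\ker \Delta _{A}^{\left[ \alpha \right] +1}=\ker \Delta _{N}^{\left[ \alpha \right] +1}$.

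Combining these two observations with Theorem 2.18 applied to $N$ (which has real spectrum) yields
\[
\mathcal{D}_{A}^{\alpha }\left( \mathbb{R}\right) =\mathcal{D}_{N}^{\alpha }\left( \mathbb{R}\right) =\ker \Delta _{N}^{\left[ \alpha \right] +1}=\ker \Delta _{A}^{\left[ \alpha \right] +1},
\]
which is the assertion. Finally, when $0\leq \alpha <1$ we have $\left[ \alpha \right] =0$, so $\left[ \alpha \right] +1=1$ and $\ker \Delta _{A}=\left\{ T\in B\left( X\right) :AT=TA\right\} =\left\{ A\right\} ^{\prime }$, giving the stated special case. Since the whole argument is a translation followed by an appeal to Theorem 2.18, I do not expect any substantial obstacle; the only point to be careful about is that $\lambda _{0}$ need not be real, but because the parameter $t$ runs over $\mathbb{R}$ the scalar $e^{t\lambda _{0}}$ still cancels exactly in the conjugation, so the reduction goes through unchanged.
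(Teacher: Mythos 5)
Your proposal is correct and follows essentially the same route as the paper: the paper's proof also passes to $B=A-\lambda I$, notes that $\sigma(B)=\{0\}$ and $\Delta_B^n(T)=\Delta_A^n(T)$, and invokes the real-spectrum theorem (Theorem 3.1 in the paper's numbering, not 2.18). Your explicit verification that the scalar factor $e^{t\lambda_0}$ cancels in the conjugation even for non-real $\lambda_0$ is a useful detail the paper leaves implicit, but the argument is the same.
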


\begin{proof}
Assume that $\sigma \left( A\right) =\left\{ \lambda \right\} .$ If $T\in 
\mathcal{D}_{A}^{\alpha }\left( 
\mathbb{R}
\right) $ then $T\in \mathcal{D}_{B}^{\alpha }\left( 
\mathbb{R}
\right) ,$ where $B=A-\lambda I.$ Since $\sigma \left( B\right) =\left\{
0\right\} $ and $\Delta _{B}^{n}\left( T\right) =\Delta _{A}^{n}\left(
T\right) $ for all $n\in 
\mathbb{N}
$, by Theorem 3.1 we obtain our result.
\end{proof}

\begin{example}
If $\alpha \geq 1,$ then $\mathcal{D}_{A}^{\alpha }\left( 
\mathbb{R}
\right) \neq \left\{ A\right\} ^{\prime },$ in general. To see this, let $%
A=\left( 
\begin{array}{cc}
0 & 0 \\ 
1 & 0%
\end{array}%
\right) $ and $T=\left( 
\begin{array}{cc}
1 & 0 \\ 
0 & 0%
\end{array}%
\right) $ be two $2\times 2$ matrices on $2-$dimensional Hilbert space. As $%
A^{2}=0,$ we have $\sigma \left( A\right) =\left\{ 0\right\} $ and 
\begin{equation*}
e^{tA}Te^{-tA}=\left( I+tA\right) T\left( I-tA\right) =\left( 
\begin{array}{cc}
1 & 0 \\ 
t & 0%
\end{array}%
\right) \text{, \ }\forall t\in 
\mathbb{R}
.
\end{equation*}%
Since%
\begin{equation*}
\left\Vert e^{tA}Te^{-tA}\right\Vert =\left( 1+\left\vert t\right\vert
^{2}\right) ^{\frac{1}{2}},
\end{equation*}%
we have $T\in \mathcal{D}_{A}^{1}\left( 
\mathbb{R}
\right) ,$ but $AT\neq TA.$
\end{example}

\begin{example}
The condition $\left\Vert e^{tA}Te^{-tA}\right\Vert \leq C_{T}\left(
1+t\right) ^{\alpha }$ for all $t\geq 0$ is not sufficient in Corollary 3.2.
To see this, let $S$ be a contraction on a Hilbert space $H$ such that $%
\sigma \left( S\right) =\left\{ 1\right\} $ and $S\neq I$ $($see, Example
2.10$)$. If $Q:=S-I,$ then $\sigma \left( Q\right) =\left\{ 0\right\} $ and $%
\left\Vert e^{tQ}\right\Vert \leq 1$ for all $t\geq 0.$ Define the operator $%
A$ on $H\oplus H$ by $A=\left( 
\begin{array}{cc}
-Q & 0 \\ 
0 & 0%
\end{array}%
\right) $. Since $e^{tA}=\left( 
\begin{array}{cc}
e^{-tQ} & 0 \\ 
0 & I%
\end{array}%
\right) ,$ for $T=\left( 
\begin{array}{cc}
0 & 0 \\ 
I & 0%
\end{array}%
\right) $, we have 
\begin{equation*}
e^{tA}Te^{-tA}=\left( 
\begin{array}{cc}
0 & 0 \\ 
e^{tQ} & 0%
\end{array}%
\right) .
\end{equation*}%
It follows that 
\begin{equation*}
\left\Vert e^{tA}Te^{-tA}\right\Vert =\left\Vert e^{tQ}\right\Vert \leq 1%
\text{ }\left( \forall t\geq 0\right) ,
\end{equation*}%
but $AT\neq TA.$
\end{example}

Next, we give some results related to the decomposability.

\begin{proposition}
Assume that $A\in B\left( X\right) $ is decomposable and $T\in B\left(
X\right) $ satisfies the condition 
\begin{equation*}
\left\Vert e^{tA}Te^{-tA}\right\Vert \leq C\left( 1+\left\vert t\right\vert
\right) ^{\alpha }\text{,}
\end{equation*}
for all $t\in 
\mathbb{R}
$ and for some constant $C>0.$ Then the following conditions are equivalent:

$\left( a\right) $ $TX_{A}\left( F\right) \subseteq X_{A}\left( F\right) $
for every closed set $F\subset 
\mathbb{C}
.$

$\left( b\right) \mathcal{\ }\Delta _{A}^{\left[ \alpha \right] +1}\left(
T\right) =0.$

In particular, if $0\leq \alpha <1,$ then $AT=TA$ if and only if $%
TX_{A}\left( F\right) \subseteq X_{A}\left( F\right) $ for every closed set $%
F\subset 
\mathbb{C}
.$
\end{proposition}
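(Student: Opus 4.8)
The plan is to read the hypothesis as the by-now familiar growth condition attached to a single operator. Since $e^{tA}Te^{-tA}=e^{t\Delta _{A}}\left( T\right) $ for every $t\in \mathbb{R}$, the assumption on $T$ says precisely that $\Delta _{A}\in B\left( B\left( X\right) \right) $ and the element $T$ of $B\left( X\right) $ satisfy condition $\left( 1.1\right) $ with the polynomial weight $\omega \left( t\right) =\left( 1+\left\vert t\right\vert \right) ^{\alpha }$. Moreover, since $A$ is decomposable the left and right multiplication operators $L_{A},R_{A}$ on $B\left( X\right) $ are decomposable, and hence $\Delta _{A}=L_{A}-R_{A}$ has SVEP. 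Put $k=\left[ \alpha \right] $; we may assume $T\neq 0$, the case $T=0$ being trivial.

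For $\left( b\right) \Rightarrow \left( a\right) $ I would argue directly, without invoking the propositions of Section 2. Write $S_{j}=\Delta _{A}^{j}\left( T\right) $, so that $AS_{j}-S_{j}A=S_{j+1}$ $\left( 0\leq j\leq k\right) $ and $S_{k+1}=0$. Fix a closed set $F\subseteq \mathbb{C}$ and $x\in X_{A}\left( F\right) $, and let $f:\mathbb{C}\diagdown F\rightarrow X$ be the analytic function with $\left( zI-A\right) f\left( z\right) =x$ there. Differentiating this identity $j$ times gives $\left( zI-A\right) f^{\left( j\right) }\left( z\right) =-jf^{\left( j-1\right) }\left( z\right) $ for $j\geq 1$, while $\left( zI-A\right) S_{j}=S_{j}\left( zI-A\right) -S_{j+1}$. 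Setting
\[
g\left( z\right) :=\sum_{j=0}^{k}\frac{\left( -1\right) ^{j}}{j!}S_{j}f^{\left( j\right) }\left( z\right) ,
\]
which is analytic on $\mathbb{C}\diagdown F$, a short computation shows that every cross term telescopes and $\left( zI-A\right) g\left( z\right) =Tx$ on $\mathbb{C}\diagdown F$; hence $\sigma _{A}\left( Tx\right) \subseteq F$, i.e. $Tx\in X_{A}\left( F\right) $. (This direction uses neither the growth condition nor the decomposability of $A$; one can also recognise $g$ as the value of the generalized Leibniz formula $\left( zI-A\right) ^{-1}T=\sum _{j}S_{j}\left( zI-A\right) ^{-\left( j+1\right) }$ applied to $x$.)

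For $\left( a\right) \Rightarrow \left( b\right) $ the strategy is to show that $\left( a\right) $ forces $\sigma _{\Delta _{A}}\left( T\right) =\left\{ 0\right\} $ and then to quote Corollary 2.8. By Proposition 2.2 we already have $\sigma _{\Delta _{A}}\left( T\right) \subseteq i\mathbb{R}$, and $\sigma _{\Delta _{A}}\left( T\right) \subseteq \sigma \left( \Delta _{A}\right) =\sigma \left( A\right) -\sigma \left( A\right) $; the new input is that condition $\left( a\right) $, which says exactly that $T$ leaves invariant every local spectral subspace $X_{A}\left( F\right) $ of the decomposable operator $A$, cuts this down to the single point $0$. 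Indeed, by the local spectral theory of generalized derivations this invariance is equivalent to $T$ asymptotically intertwining $A$ with itself, i.e. $\lim _{n}\left\Vert \Delta _{A}^{n}\left( T\right) \right\Vert ^{1/n}=0$, equivalently $\sigma _{\Delta _{A}}\left( T\right) \subseteq \left\{ 0\right\} $; since $\Delta _{A}$ has SVEP and $T\neq 0$, the local spectrum is nonempty, so $\sigma _{\Delta _{A}}\left( T\right) =\left\{ 0\right\} $. Now $\Delta _{A}$ has SVEP and $\left\Vert e^{t\Delta _{A}}\left( T\right) \right\Vert \leq C\left( 1+\left\vert t\right\vert \right) ^{\alpha }$, so Corollary 2.8 applied to the operator $\Delta _{A}$ at the point $T$ gives $\Delta _{A}^{\left[ \alpha \right] +1}\left( T\right) =0$. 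Finally, if $0\leq \alpha <1$ then $\left[ \alpha \right] =0$ and $\left( b\right) $ reads $\Delta _{A}\left( T\right) =0$, i.e. $AT=TA$.

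The main obstacle is the implication $\left( a\right) \Rightarrow \sigma _{\Delta _{A}}\left( T\right) \subseteq \left\{ 0\right\} $, together with the fact that $\Delta _{A}$ inherits SVEP from the decomposability of $A$: these are the only places where the full strength of decomposability is used, and both rest on external results about the local spectral theory of the elementary operator $L_{A}-R_{A}$ rather than on the Beurling-algebra machinery of Section 2. By contrast $\left( b\right) \Rightarrow \left( a\right) $ is an elementary, self-contained computation, and the passage from $\sigma _{\Delta _{A}}\left( T\right) =\left\{ 0\right\} $ to $\Delta _{A}^{\left[ \alpha \right] +1}\left( T\right) =0$ is immediate from Corollary 2.8.
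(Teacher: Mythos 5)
Your proof is correct, and the substantive direction $\left( a\right) \Rightarrow \left( b\right) $ follows the paper's own route exactly: the growth hypothesis transfers to the pair $\left( \Delta _{A},T\right) $ via $e^{tA}Te^{-tA}=e^{t\Delta _{A}}\left( T\right) $, SVEP of $\Delta _{A}$ gives $r_{\Delta _{A}}\left( T\right) =\overline{\lim }_{n}\left\Vert \Delta _{A}^{n}\left( T\right) \right\Vert ^{1/n}$, the equivalence of $\left( a\right) $ with $\lim_{n}\left\Vert \Delta _{A}^{n}\left( T\right) \right\Vert ^{1/n}=0$ is quoted from the local spectral theory of elementary operators (the paper cites Laursen--Neumann, Corollary 3.4.5), and Corollary 2.8 finishes. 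One caution: your inference ``$L_{A},R_{A}$ decomposable, \emph{hence} $\Delta _{A}=L_{A}-R_{A}$ has SVEP'' is not a valid general principle (sums and differences of commuting decomposable operators need not be decomposable, and SVEP is not automatically inherited); the correct justification is the specific result the paper cites, namely that $A$ decomposable implies $L_{A}-R_{A}$ has SVEP \cite[Proposition 3.4.6]{12}. Where you genuinely diverge is $\left( b\right) \Rightarrow \left( a\right) $: the paper again routes this through $\lim_{n}\left\Vert \Delta _{A}^{n}\left( T\right) \right\Vert ^{1/n}=0$ and the same external corollary, whereas you give the direct telescoping computation with
\begin{equation*}
g\left( z\right) =\sum_{j=0}^{k}\frac{\left( -1\right) ^{j}}{j!}\Delta _{A}^{j}\left( T\right) f^{\left( j\right) }\left( z\right) ,\qquad \left( zI-A\right) g\left( z\right) =Tx,
\end{equation*}
which uses neither the growth condition nor decomposability and so is strictly more elementary and more general. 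The paper itself records this alternative in Remark 3.6, but your version is the more careful one: the identity $\left( zI-A\right) f^{\left( j\right) }=-jf^{\left( j-1\right) }$ forces the coefficients $\left( -1\right) ^{j}/j!$ for the cross terms to cancel, and the formula in Remark 3.6, which omits the factorials, telescopes only for $k\leq 1$. The one small point of hygiene in your write-up is that a single-valued analytic local resolvent $f$ on all of $\mathbb{C}\diagdown F$ requires SVEP of $A$ (available here from decomposability); if one wants the fully hypothesis-free version of $\left( b\right) \Rightarrow \left( a\right) $ one should run the same computation locally on each neighborhood $U_{\lambda }$, as the Remark does.
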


\begin{proof}
(a)$\Rightarrow $(b) We have%
\begin{equation*}
\left\Vert e^{t\Delta _{A}}\left( T\right) \right\Vert =\left\Vert
e^{tA}Te^{-tA}\right\Vert \leq C\left( 1+\left\vert t\right\vert \right)
^{\alpha }\text{, \ }\forall t\in 
\mathbb{R}
.
\end{equation*}%
Since $A$ is decomposable, $\Delta _{A}$ has SVEP \cite[Proposition 3.4.6]%
{12} and therefore,%
\begin{equation*}
r_{\Delta _{A}}\left( T\right) =\underset{n\rightarrow \infty }{\overline{%
\lim }}\left\Vert \Delta _{A}^{n}\left( T\right) \right\Vert ^{\frac{1}{n}}.
\end{equation*}%
On the other hand, $TX_{A}\left( F\right) \subseteq X_{A}\left( F\right) $
for every closed set $F\subset 
\mathbb{C}
$ if and only if 
\begin{equation*}
\lim_{n\rightarrow \infty }\left\Vert \Delta _{A}^{n}\left( T\right)
\right\Vert ^{\frac{1}{n}}=0
\end{equation*}%
\cite[Corollary 3.4.5]{12}. Consequently, $r_{\Delta _{A}}\left( T\right) =0$
and so $\sigma _{\Delta _{A}}\left( T\right) =\left\{ 0\right\} .$ By
Corollary 2.8, $\Delta _{A}^{\left[ \alpha \right] +1}\left( T\right) =0.$

(b)$\Rightarrow $(a) If $\Delta _{A}^{\left[ \alpha \right] +1}\left(
T\right) =0,$ then $\lim_{n\rightarrow \infty }\left\Vert \Delta
_{A}^{n}\left( T\right) \right\Vert ^{\frac{1}{n}}=0.$ By \cite[Corollary
3.4.5]{12}, $TX_{A}\left( F\right) \subseteq X_{A}\left( F\right) $ for
every closed set $F\subset 
\mathbb{C}
.$
\end{proof}

\begin{remark}
The proof of $($b$)\Rightarrow \left( \text{a}\right) $ in Proposition 3.5,
can be simplified as follows: It suffices to show that $\sigma _{A}\left(
Tx\right) \subseteq \sigma _{A}\left( x\right) $ for every $x\in X.$ If $%
x\in X$ and $\lambda \in \rho _{A}\left( x\right) ,$ then there is a
neighborhood $U_{\lambda }$ of $\lambda $ with $u\left( z\right) $ analytic
on $U_{\lambda }$ having values in $X$ such that $\left( zI-A\right) u\left(
z\right) =x$ for all $z\in U_{\lambda }.$ Using this, it is easy to check
that the function 
\begin{equation*}
v\left( z\right) :=Tu\left( z\right) -\Delta _{A}\left( T\right) u^{\prime
}\left( z\right) +...+\left( -1\right) ^{k}\Delta ^{k}\left( T\right)
u^{\left( k\right) }\left( z\right) \text{ \ }\left( k=\left[ \alpha \right]
\right)
\end{equation*}%
satisfies the equation $\left( zI-A\right) v\left( z\right) =Tx$ for all $%
z\in U_{\lambda }.$ This shows that $\lambda \in \rho _{A}\left( Tx\right) .$
\end{remark}

\begin{proposition}
Assume that the operators $A,T$ in $B\left( X\right) $ satisfy the following
conditions:

$\left( i\right) $ $A$ is decomposable and $\sigma \left( A\right) \subset 
\mathbb{C}
\diagdown 
\mathbb{R}
_{-},$ where $%
\mathbb{R}
_{-}=\left\{ t\in 
\mathbb{R}
:t\leq 0\right\} .$

$\left( ii\right) $ $\left\Vert A^{n}TA^{-n}\right\Vert \leq C\left(
1+\left\vert n\right\vert \right) ^{\alpha }$ $\left( 0\leq \alpha <1\right) 
$ for all $n\in 
\mathbb{Z}
$ and for some constant $C>0$.

Then $AT=TA$ if and only if $TX_{A}\left( F\right) \subseteq X_{A}\left(
F\right) $ for every closed set $F\subset 
\mathbb{C}
$.
\end{proposition}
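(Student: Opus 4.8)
The plan is to reduce everything to Proposition 3.5 by replacing $A$ with a logarithm $B=\log A$. Since $\sigma(A)\subset\mathbb{C}\setminus\mathbb{R}_{-}$ and the slit plane $\mathbb{C}\setminus\mathbb{R}_{-}$ is a simply connected open set containing $\sigma(A)$ on which the principal branch of the logarithm is holomorphic, the analytic functional calculus produces an operator $B:=\log A\in B(X)$ with $e^{B}=A$, so that $A^{n}=e^{nB}$ for every $n\in\mathbb{Z}$. Because $A$ is decomposable and $\log$ is holomorphic and nowhere locally constant on a neighborhood of $\sigma(A)$, standard properties of the analytic functional calculus of decomposable operators (see \cite{12}) give that $B$ is again decomposable, that $\sigma(B)=\log\sigma(A)$, and that the local spectral mapping theorem $\sigma_{B}(y)=\log\sigma_{A}(y)$ holds for every $y\in X$. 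Moreover $B$ lies in the closed subalgebra of $B(X)$ generated by the resolvents of $A$, so an operator commutes with $A$ precisely when it commutes with $B$; in particular $AT=TA$ if and only if $BT=TB$.

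Next I would pass from the discrete hypothesis (ii) to the continuous estimate needed in Proposition 3.5. Writing $\Delta_{B}(S)=BS-SB$, we have $A^{n}TA^{-n}=e^{nB}Te^{-nB}=e^{n\Delta_{B}}(T)$, so (ii) reads $\|e^{n\Delta_{B}}(T)\|\le C(1+|n|)^{\alpha}$ for all $n\in\mathbb{Z}$. Given $t\in\mathbb{R}$, choose $n\in\mathbb{Z}$ with $|t-n|\le 1$ and write $e^{t\Delta_{B}}(T)=e^{(t-n)\Delta_{B}}\bigl(e^{n\Delta_{B}}(T)\bigr)$; since $\|e^{(t-n)\Delta_{B}}\|\le e^{\|\Delta_{B}\|}$, we obtain $\|e^{tB}Te^{-tB}\|=\|e^{t\Delta_{B}}(T)\|\le e^{\|\Delta_{B}\|}C(1+|n|)^{\alpha}\le C'(1+|t|)^{\alpha}$ for a suitable constant $C'>0$. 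As $B$ is decomposable and $0\le\alpha<1$, Proposition 3.5 applied with $B$ in place of $A$ gives: $BT=TB$ if and only if $TX_{B}(F)\subseteq X_{B}(F)$ for every closed set $F\subset\mathbb{C}$.

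Finally I would identify the two local spectral subspace conditions. For any $S\in B(X)$, the inclusion $SX_{A}(F)\subseteq X_{A}(F)$ for every closed $F$ is equivalent to $\sigma_{A}(Sx)\subseteq\sigma_{A}(x)$ for all $x\in X$ (the nontrivial direction follows by taking $F=\sigma_{A}(x)$), and likewise for $B$. By the local spectral mapping $\sigma_{B}(y)=\log\sigma_{A}(y)$ and the injectivity of $\log$ on $\sigma(A)$ (which contains $\sigma_{A}(Sx)\cup\sigma_{A}(x)$), we have $\sigma_{B}(Sx)\subseteq\sigma_{B}(x)$ if and only if $\sigma_{A}(Sx)\subseteq\sigma_{A}(x)$. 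Hence $TX_{A}(F)\subseteq X_{A}(F)$ for every closed $F$ if and only if $TX_{B}(F)\subseteq X_{B}(F)$ for every closed $F$; combining this with the equivalence obtained from Proposition 3.5 and with $AT=TA\iff BT=TB$ finishes the proof.

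The hard part will be the three functional-calculus facts invoked for $B=\log A$ — decomposability of $B$, the identity $\sigma(B)=\log\sigma(A)$, and the local spectral mapping $\sigma_{B}=\log\circ\,\sigma_{A}$ — but all are standard consequences of the analytic functional calculus for a decomposable (hence SVEP) operator and a function that is nowhere locally constant near the spectrum; the passage from integer to real $t$ and the formal equivalence of the two subspace conditions are routine.
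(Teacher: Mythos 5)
Your proposal is correct and follows essentially the same route as the paper: pass to $B=\log A$ via the analytic functional calculus, upgrade the discrete bound on $\Vert A^{n}TA^{-n}\Vert$ to a continuous bound on $\Vert e^{tB}Te^{-tB}\Vert$ by writing $t$ as an integer plus a bounded remainder, and then invoke Proposition 3.5 for $B$. The only cosmetic difference is that you transfer the invariance condition through the local spectral mapping theorem $\sigma_{B}(y)=\log\sigma_{A}(y)$, while the paper uses the equivalent subspace identity $X_{B}(F)=X_{A}\left(f^{-1}(F)\right)$ from the same source.
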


\begin{proof}
It is trivial to show that if $AT=TA,$ then $\sigma _{A}\left( Tx\right)
\subseteq \sigma _{A}\left( x\right) $ for every $x\in X.$ It follows that $%
TX_{A}\left( F\right) \subseteq X_{A}\left( F\right) $ for every closed set $%
F\subset 
\mathbb{C}
$. Now, assume that $TX_{A}\left( F\right) \subseteq X_{A}\left( F\right) $
for every closed set $F\subset 
\mathbb{C}
$. For $z\in 
\mathbb{C}
\diagdown 
\mathbb{R}
_{-},$ let $f\left( z\right) :=\log z$ is the principal branch of the
logarithm. Then $A=e^{B},$ where $B=\log A:=f\left( A\right) .$ Notice that $%
B$ is decomposable \cite[Theorem 3.3.6]{12}. Moreover, if $t\in 
\mathbb{R}
,$ then as $t=n+r,$ where $n\in 
\mathbb{Z}
$, $\left\vert r\right\vert <1,$ and $\left\vert n\right\vert \leq
\left\vert t\right\vert ,$ we can write%
\begin{eqnarray*}
\left\Vert e^{tB}Te^{-tB}\right\Vert  &=&\left\Vert
e^{rB}e^{nB}Te^{-nB}e^{-rB}\right\Vert  \\
&\leq &C\left\Vert e^{B}\right\Vert \left\Vert e^{-B}\right\Vert \left(
1+\left\vert n\right\vert \right) ^{\alpha } \\
&\leq &C\left\Vert e^{B}\right\Vert \left\Vert e^{-B}\right\Vert \left(
1+\left\vert t\right\vert \right) ^{\alpha }.
\end{eqnarray*}%
Since 
\begin{equation*}
X_{B}\left( F\right) =X_{A}\left( f^{-1}\left( F\right) \right) ,
\end{equation*}%
for every closed set $F\subset 
\mathbb{C}
$ \cite[Theorem 3.3.6]{12}, we get%
\begin{equation*}
TX_{B}\left( F\right) =TX_{A}(f^{-1}\left( F\right) \subseteq
X_{A}(f^{-1}\left( F\right) =X_{B}\left( F\right) .
\end{equation*}%
By Proposition 3.6, $BT=TB$ which implies $e^{B}T=Te^{B}.$ Hence $AT=TA.$
\end{proof}

\end{document}